\definecolor{linkcolor}{rgb}{0.5,0.0,0.0}
\definecolor{citecolor}{rgb}{0.0,0.5,0.0}
\definecolor{urlcolor} {rgb}{0.0,0.0,0.5}
\title[Torsion-free connections of superintegrable systems]
	{Torsion-free connections\\ of second-order maximally superintegrable systems}
\subjclass[2010]{
	Primary
	53A20;  
	Secondary
	53B50,	
	70H33.  
}
\author{Andreas Vollmer$^\sharp$}
\address[$\sharp$]{
	Universität Hamburg\\
	Fachbereich Mathematik\\
	Bundesstr. 55\\
	Hamburg, Germany
}
\numberwithin{equation}{section}
\newtheorem{theorem}{Theorem}[section]
\newtheorem{proposition}[theorem]{Proposition}
\newtheorem{lemma}[theorem]{Lemma}
\theoremstyle{definition}
\declaretheorem[name=Definition,qed={\lower-0.3ex\hbox{$\blacktriangleleft$}}]{definition}
\declaretheorem[name=Remark,qed={\lower-0.3ex\hbox{$\blacktriangleleft$}}]{remark}
\declaretheorem[name=Example,qed={\lower-0.3ex\hbox{$\blacktriangleleft$}}]{example}
\newcommand{\RR}{\ensuremath{\mathbb{R}}}
\newcommand{\Ric}{\mathrm{Ric}}
\newcommand{\Riem}{\mathrm{Riem}}
\newcommand{\Proj}{\mathrm{Proj}}
\newcommand{\Scal}{\mathrm{Scal}}
\newcommand{\tr}{\mathrm{tr}}
\renewcommand{\div}{\mathrm{div}}
\setlist[enumerate,1]{label=(\roman*)}
\begin{document}

\begin{abstract}
	Second-order (maximally) conformally superintegrable systems play an important role as models of mechanical systems, including systems such as the Kepler-Coulomb system and the isotropic harmonic oscillator.
	The present paper is dedicated to understanding non- and semi-degenerate systems. 
	We obtain ``projective flatness'' results for two torsion-free connections naturally associated to such systems.
	This viewpoint sheds some light onto the interrelationship of properly and conformally (second-order maximally) superintegrable systems from a geometrical perspective.
	It is shown that the semi-degenerate secondary structure tensor can be viewed as the Ricci curvature of a natural torsion-free connection defined by the primary structure tensor (and similarly in the non-degenerate case).
	It is also shown that properly semi-degenerate systems are characterised, similar to the non-degenerate case, by the vanishing of the secondary structure tensor.
	%
\end{abstract}

\maketitle

\section{Introduction}\label{sec:intro}

In this paper, second-order (maximally) superintegrable systems are investigated. These play an important role in mathematical physics. For instance, the isotropic harmonic oscillator appears in many models in physics, such as classical mechanics or material sciences. The Kepler-Coulomb system, on the other hand, is a classical model in celestial mechanics, and its quantum version is pivotal in chemistry as a model for the atomic orbits of the hydrogen atom. \emph{Conformally} superintegrable systems are a generalisation of (properly) superintegrable systems, and naturally arise as one studies conformal rescalings of superintegrable systems. Conformal rescalings are, loosely speaking, modifications of the so-called Stäckel transformations (also known as coupling constant metamorphosis).

Here, we focus on so-called \emph{irreducible} systems, which can be characterised geometrically via so-called \emph{structure tensors}.
More specifically we focus on \emph{non-degenerate} and \emph{semi-degenerate} systems, which will be thoroughly introduced below.
A geometric framework, suitable in arbitrary dimension, has recently been developed for irreducible systems \cite{KSV2023,KSV2024}. It is in particular well adapted to studying the algebraic and conformal geometry underpinning \emph{non-degenerate} systems.
Non-degenerate systems are the ones studied most thoroughly in the literature to date, see e.g.~\cite{KKM18}. In particular, they are classified in dimension two and three, e.g. \cite{Kalnins&Kress&Pogosyan&Miller,Kress&Schoebel,KKM05a,KKM05b,KKM05c,KKM06a,Capel}.
Semi-degenerate systems do not exist in dimension two, but examples are known in dimension three (we do not allow mere restrictions of non-degenerate systems, see Remark~\ref{rmk:extendable} below). A full classification is to date still an open problem.

This paper is organised as follows: In Section~\ref{sec:prelim}, we introduce the main terminology needed for the purposes of the paper, including irreducible superintegrable systems and structure tensors for example. An overview of the main results is given in Section~\ref{sec:res}.

In Section~\ref{sec:connection&curvature} we study conformal superintegrability and use the primary structure tensor of a (non- or semi-degenerate) irreducible system in order to introduce a natural torsion-free connection $\nabla$. We obtain an intimate interconnection between the secondary structure tensor and the Ricci curvature of this connection. For semi-degenerate conformally superintegrable systems, in particular, we prove that the induced connection is projectively flat.

In Section~\ref{sec:proper} we then focus on proper superintegrability. We obtain curvature obstructions for the induced connections. In particular, for semi-degenerate systems, we find a Ricci-flatness condition. 
The findings are illustrated with some examples in Section~\ref{sec:ex}. The paper is concluded with a final discussion in Section~\ref{sec:discussion}.

\subsection{Preliminaries}\label{sec:prelim}

Let $(M,g)$ be a (simply-connected) Riemannian manifold of dimension $n$. We denote its Levi-Civita connection by $\nabla^g$ and its Laplace operator by $\Delta^g$.
The cotangent bundle $T^*M$ is naturally endowed with a symplectic structure and its induced canonical Poisson bracket, which we denote by $\{-,-\}$. Canonical Darboux coordinates will be denoted by $(x,p)$ where $x=(x_1,\dots,x_n)$ are the position and $p=(p_1,\dots,p_n)$ the momenta coordinates.
We consider a \emph{Hamiltonian}, i.e.~a function $H:T^*M\to\RR$, $H(x,p)=g^{-1}_x(p,p)+V(x)$, where $V:M\to\RR$ is a function on $M$, called \emph{potential}. The space of (tracefree) conformal Killing tensors admitted by $g$ is going to be denoted by $\mathcal C(g)$, while the space of (proper) Killing tensors is denoted by $\mathcal K(g)$. We usually drop the specification of $g$ if the underlying metric is clear.

We say that $H$ is (second-order maximally) \emph{properly superintegrable} if there exists a subset $\mathfrak K\subset\mathcal K$ of dimension $\dim(\mathfrak K)\geq 2n-2$ such that there are $2n-1$ functionally independent functions $F^{(r)}$, $r\in\{0,1,\dots,2n-2\}$, with the following properties (the Einstein convention applies):
\begin{enumerate}
	\item $F^{(0)}=H$
	\item $F^{(r)}=K_x(g^{-1}p,g^{-1}p)+W(x)$ where $K$ is a (proper) Killing tensor, i.e.~satisfies
	\[
		\nabla_{(i}K_{jk)}=0\,,
	\]
	where round brackets denote symmetrisation over enclosed indices,
	and where
	\begin{equation}\label{eq:dW.proper}
		\nabla_kW = K\indices{_k^a}\nabla_aV\,.
	\end{equation}
	Note that we interpret $K$ here as a $(1,1)$-tensor by virtue of the metric $g$, using the same symbol by a slight abuse of notation.
\end{enumerate}
We observe that~\eqref{eq:dW.proper} is involutive, i.e.~can be integrated for $W$, if
\begin{equation}\label{eq:ddW.proper}
	d(K(dV)) = 0\,,\quad\text{i.e.~~}
	K\indices{^a_{k,j}}\nabla_aV-K\indices{^a_{j,k}}\nabla_aV
	+K\indices{^a_k}\nabla^2_{aj}V-K\indices{^a_j}\nabla^2_{ak}V=0\,.
\end{equation}
This is the so-called (proper or classical) \emph{Bertrand-Darboux condition} \cite{bertrand_1857,darboux_1901}.
\smallskip

Similarly, we say that $H$ is (second-order maximally) \emph{conformally superintegrable} if there exists a subset $\mathfrak C\subset\mathcal C$ of dimension $\dim(\mathfrak C)\geq 2n-2$ such that there are $2n-1$ functionally independent functions $F^{(r)}$, $r\in\{0,1,\dots,2n-2\}$, with the following properties:
\begin{enumerate}
	\item $F^{(0)}=H$
	\item $F^{(r)}=C_x(g^{-1}p,g^{-1}p)+W(x)$ where $C\in\mathfrak C$ and
	\begin{equation}\label{eq:dW}
		\nabla_kW = C\indices{_k^a}\nabla_aV-\rho_k\,V\,,
	\end{equation}
	with $\rho=\frac2{n+2}\div(C)$.
\end{enumerate}
Note that~\eqref{eq:dW} is involutive, i.e.~can be integrated, if
\begin{equation}\label{eq:ddW}
	d(K(dV))-V\,d\rho-dV\wedge\rho = 0\,.
\end{equation}
This is the so-called \emph{conformal Bertrand-Darboux condition}.

A (conformally) superintegrable system is called \emph{irreducible} if $\mathfrak K$ (respectively $\mathfrak C$) is an irreducible set, i.e.~if the (conformal) Killing tensors do not have a common eigenspace.
For irreducible systems it has been shown that~\eqref{eq:ddW.proper} respectively \eqref{eq:ddW} can be solved for the tracefree Hessian of $V$,
\begin{equation}\label{eq:Wilczynski}
	\nabla^{g,2}_{ij}V
	= T\indices{_{ij}^k}\nabla_kV + V\,\tau_{ij} + \frac1n \Delta^g V\ g_{ij}.
\end{equation}
where $\nabla^{g,2}V$ denotes the covariant Hessian of $V$.
The coefficients $T\indices{_{ij}^k}$ and $\tau_{ij}$ can be shown to be components of tensor fields, and will therefore be called (primary and secondary) structure tensors.
Equation~\eqref{eq:Wilczynski} will be referred to as \emph{Wilczynski equation} of the superintegrable system \cite{KSV2023,KSV2024}.\bigskip
%

By a slight abuse of notation (and following the usual terminology), a (maximal) space of potentials $\mathcal V$ of an irreducible (second-order maximally) conformally superintegrable system is called an \emph{(irreducible superintegrable) potential}. We shall even simply write $V$ for such a potential when we have an explicit parametrisation of this space in mind. However, note that the explicit choice of a parametrisation is not important and just a convenient choice when carrying out explicit computations.

\begin{definition}
	An (irreducible superintegrable) potential is called
	\begin{enumerate}[label=(\roman*)]
		\item \emph{non-degenerate} if~\eqref{eq:Wilczynski} admits an $n+2$ dimensional space of solutions $V$.
		\item \emph{semi-degenerate} if~\eqref{eq:Wilczynski} admits an $n+1$ dimensional space of solutions $V$.
		\item \emph{of higher degeneracy} otherwise.
	\end{enumerate}
\end{definition}

In the non-degenerate case, for a point $x_0\in M$, we may consider $V_{x_0}$, $\Delta^g V_{x_0}$ and the components of $dV_{x_0}$ to be linearly independent.
In the semi-degenerate case, on the other hand, there has to be a relation of the form
\begin{equation}\label{eq:semi-degeneracy}
	\Delta^gV = g^{-1}(s,dV) + \alpha\,V\,,
\end{equation}
where $s$ is a 1-form and where $\Delta^g$ is the Laplace operator of~$g$.
We may then assume $V$ and the components of $\nabla V$ to be linearly independent.
For potentials of higher degeneracy a similar statement holds, but in the current paper, we restrict our attention to non- and semi-degenerate potentials.
To ensure a concise notation, we agree on the following terminology regarding semi-degenerate potentials: we unite~\eqref{eq:Wilczynski} and~\eqref{eq:semi-degeneracy} in the equation
\begin{equation}\label{eq:semi-Wilczynski}
	(\nabla^{g})^2_{ij}V = D\indices{_{ij}^k}\nabla_kV + \eta_{ij}\,V\,.
\end{equation}
where we introduce the \emph{(primary) semi-degenerate structure tensor}
$$ D\indices{_{ij}^k}:= T\indices{_{ij}^k}+\frac1n\,g_{ij}\otimes s^k$$
and the \emph{secondary semi-degenerate structure tensor}
$$ \eta_{ij}=\tau_{ij}+\frac1n\,\alpha\,V\,g_{ij}. $$
Similarly, for a non-degenerate potential, we call $T$ and $\tau$ the (primary respectively secondary) \emph{non-degenerate structure tensor}. Note that these tensors are uniquely determined by the space $\mathfrak C$ (or $\mathfrak K$, respectively).

\begin{remark}\label{rmk:extendable}
	Note that we consider \emph{non-extendable} potentials in the cases of semi-degeneracy and higher degeneracy as we require $\mathcal V$ to be a maximal space of potentials. A weaker definition would allow a semi-degenerate potential to be merely the subset of a non-degenerate potential, but here we intend to exclude this phenomenon in order to ensure a focused exposition.
\end{remark}

\subsection*{Non-degeneracy}
Non-degenerate systems are the ones that have, to date, been studied most thoroughly. They are classified in dimensions two and three, including a classification of their conformal classes (i.e.\ St\"ackel classes) \cite{Kalnins&Kress&Pogosyan&Miller,Kress07,Kress&Schoebel,Kalnins&Kress&Miller,Capel,Capel_phdthesis,KKM05a,KKM05b,KKM05c,KKM06a}.
Non-degeneracy is the case of maximal dimension for the space of compatible potentials.
It is shown in~\cite{KSV2023,KSV2024} that the Ricci identity for~\eqref{eq:Wilczynski} implies the system of partial differential equations
\begin{subequations}\label{eq:non-deg.prolongation}
	\begin{align}
		(\nabla^g)^2_{ij} V &= T\indices{^k_{ij}}V_{,k} + \tau_{ij}\,V + \frac1n\,\Delta V\, g_{ij}
		\\
		\frac{n-1}{n}\nabla_k\Delta V &= q_{ka}V^{,a} + \div(\tau)_k\,V
	\end{align}
	where
	\[
		q_{ij} = T\indices{^a_{ij,a}}+T\indices{_{ai}^b}T_{baj}-\Ric^g_{ij}+\tau_{ij}\,.
	\]
\end{subequations}
This implies that, locally, the potential $V$ can be recovered from the knowledge of the values of $V(x_0)$, $dV_{x_0}$ and $(\Delta V)_{x_0}$ in a point $x_0\in M$. The system~\eqref{eq:non-deg.prolongation}, expressing all higher derivatives in terms of lower ones is called a \emph{closed prolongation}.

A particular subclass of non-degenerate systems are so-called \emph{abundant} systems. These admit a space of $\frac{n(n+1)}{2}-1$ tracefree conformal Killing tensors compatible with the $(n+2)$-dimensional space of potentials.
In dimension (two and) three all non-degenerate systems are abundant (in dimension two this is a trivial statement). Abundant systems in arbitrarily high dimension have been investigated in \cite{KSV2023,KSV2024}. In particular, in \cite{KSV2024} obstructions have been found for abundant systems in dimensions $n\geq4$, which are not present in lower dimensions.

For the purposes here, the conditions of superintegrability for non-degenerate systems obtained in \cite{KSV2023,KSV2024} are particularly important. Although the mentioned references focus on abundant systems, they obtain (among other results) the superintegrability conditions for non-degenerate systems.

\subsection*{Semi-degeneracy}
The semi-degenerate case is the case of sub-maximal dimension of $\mathcal{V}$. Note that~\eqref{eq:semi-Wilczynski} already forms a closed prolongation whose initial values are the $n+1$ constants $V(x_0)$ and $dV_{x_0}$.

\subsection{Main results}\label{sec:res}

Having reviewed the theory of irreducible second-order systems as introduced in \cite{KSV2023,KSV2024}, we now use the primary structure tensors $T$ and $D$, respectively, to define connections on $TM$.
In the non-degenerate case, we consider~\eqref{eq:Wilczynski} and define the connection $\nabla^T$ by
\[
\nabla^T_XY = \nabla^{g}_XY - T(X,Y)\,.
\]
In the semi-degenerate case, on the other hand, we introduce the connection $\nabla^D$ by
\[
\nabla^D_XY = \nabla^{g}_XY - D(X,Y)\,.
\]
To ensure a concise notation, we will typically simply write $\nabla$ instead of $\nabla^T$ or $\nabla^D$, since there is no risk of confusion as these connections appear in clearly separated contexts.
We observe that $\nabla$ is torsion-free, since $\nabla^g$ is torsion-free and $T_{ijk}=g_{ka}T\indices{_{ij}^a}$ as well as $D_{ijk}=g_{ka}D\indices{_{ij}^a}$ are symmetric in the first two indices.
Furthermore, it is easy to verify:
\begin{itemize}
	\item For a non-degenerate structure tensor $T$, the Laplace operators $\Delta$ and $\Delta^g$ coincide since $T_{ijk}$ is tracefree in the first two indices.
	\item For a semi-degenerate structure tensor $D$, the Laplace operators satisfy
	\[
	\Delta^gV = \Delta V + dV(s)\,,
	\]
	where $s$ is the 1-form with components $s_k=g^{ab}T_{abk}$.
\end{itemize}
The curvature tensor of $\nabla$ will be denoted by $R=R^\nabla$, and we introduce its Riemann tensor $\Riem^{\nabla,g}$ by lowering the upper index of $R$ by virtue of the metric $g$. Moreover, we denote the Ricci tensor of $\nabla$ by $\Ric^\nabla_{ij}=R\indices{^b_{ibj}}$ and introduce the scalar curvature $\Scal^{\nabla,g}=(R^\nabla)\indices{^{ab}_{ab}}$. The superscript will usually be dropped to ensure a concise notation.

Our first set of main results concerns non-degenerate systems.
\begin{theorem}\label{thm:first}
	Let $(M^n,g)$ with $n\geq3$ and let $V$ be a non-degenerate potential with primary structure tensor $T$ and secondary structure tensor $\tau$.
	We introduce the torsion-free connection
	\[ \nabla^T := \nabla^g - T \]
	where $\nabla^g$ is the Levi-Civita connection.
	Then:
	\begin{enumerate}[label=(\roman*)]
		\item
		The secondary structure tensor $\tau$ is obtained from the curvature $R$ of~$\nabla^T$,
		\[
			\tau_{ij} = \frac1{n(n-2)}\left( g_{ib}g^{ac}R\indices{^{b}_{caj}}-g_{ij}g^{ac}R\indices{^{b}_{cab}} -(n-1)R\indices{^b_{ibj}}\right)\,.
		\]
		\item
		The curvature $R$ of $\nabla^T$ satisfies
		\begin{equation*}
			R\indices{^b_{ijk}}
			= \tau_{ij}\delta_k^b-\tau_{ik}\delta_j^b
			+\frac1{n-1}\,g_{ij}\left( g^{ab}\tau_{ak}+B\indices{^b_k}\right)
			-\frac1{n-1}\,g_{ik}\left( g^{ab}\tau_{aj}+B\indices{^b_j}\right)\,,
		\end{equation*}
		where $B\indices{^a_k}=g^{ij}R\indices{^a_{ijk}}$.
		In particular, if the system is proper ($\tau=0$), then the connection $\nabla^T$ satisfies
		$$ \Riem\indices{_{aijk}} 
		= g_{ij}\Ric\indices{_{ak}}-g_{ik}\Ric_{aj} $$
		where $\Ric$ is the Ricci tensor of $\nabla^T$ and $\Riem_{aijk}=g_{ab}R\indices{^b_{ikl}}$.
	\end{enumerate}
\end{theorem}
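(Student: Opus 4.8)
I would prove (ii) first and then obtain (i) and the proper case by taking traces of the resulting curvature identity. The key leverage is non-degeneracy: by hypothesis \eqref{eq:Wilczynski} is equivalent to the closed prolongation \eqref{eq:non-deg.prolongation}, whose solution space has the maximal dimension $n+2$, so at a generic point one can choose a compatible potential $V$ for which $V$, the $n$ components of $\nabla V$ and $\Delta V$ take arbitrary, linearly independent values. Consequently any tensor identity holding for \emph{all} compatible $V$ and linear in $(V,\nabla V,\Delta V)$ must hold coefficient-wise — the coefficients of $V$, of each $\nabla_aV$ and of $\Delta V$ matched separately. As a first step I would rewrite \eqref{eq:Wilczynski} and the second line of \eqref{eq:non-deg.prolongation} through $\nabla=\nabla^T=\nabla^g-T$ (using $\Delta=\Delta^g$): the Wilczynski equation then makes $(\nabla^T)^2V$ an explicit algebraic combination of $V$, $\nabla_aV$, $\Delta V$ with coefficients in $T,\tau,g$, and the prolongation equation makes $\nabla_i\Delta V$ explicit in $V$ and $\nabla_aV$ with coefficients in $q,\tau,\div\tau$; inserting the latter into one more $\nabla^T$-derivative makes $(\nabla^T)^3V$ fully explicit.

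Next I would apply the Ricci identity of $\nabla^T$, both to the $1$-form $dV$, in the form $\nabla^T_i(\nabla^T)^2_{jk}V-\nabla^T_j(\nabla^T)^2_{ik}V=-R\indices{^b_{kij}}\nabla_bV$, and, at the next order, to the symmetric tensor $(\nabla^T)^2V$. Matching coefficients yields three families of identities: from the coefficient of $\Delta V$, the extra symmetry making $T_{ijk}$ totally symmetric (it is already trace-free in the first two slots, and tracing \eqref{eq:Wilczynski} forces $\tr\tau=0$); from the coefficient of $V$, a first-order superintegrability condition relating $\nabla^g\tau$, $\div\tau$ and a $T\!\cdot\!\tau$ term; and from the coefficients of $\nabla_aV$, the Ricci identity of \eqref{eq:Wilczynski} itself, which expresses $\Riem^g$ through $\nabla^gT$, $T\!\cdot\!T$, $\tau$ and $q$. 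Substituting this into the change-of-connection formula $R^{\nabla^T}\indices{^b_{kij}}=\Riem^g\indices{^b_{kij}}-\nabla^g_iT\indices{_{jk}^b}+\nabla^g_jT\indices{_{ik}^b}+T\indices{_{jk}^m}T\indices{_{im}^b}-T\indices{_{ik}^m}T\indices{_{jm}^b}$ eliminates $\Riem^g$; the surviving $\nabla^gT$- and $T\!\cdot\!T$-terms are then removed using the further superintegrability conditions of \cite{KSV2023,KSV2024} (which determine $\nabla^gT$ modulo its traces), the defining expression $q_{ij}=T\indices{^a_{ij,a}}+T\indices{_{ai}^b}T_{baj}-\Ric^g_{ij}+\tau_{ij}$, and the first Bianchi identity of $\nabla^T$; the residual $q$-dependence is traded, using the identity that $q$ is thereby forced to satisfy with $B\indices{^a_k}=g^{ij}R\indices{^a_{ijk}}$, for the single curvature trace $B$, which gives the asserted formula.

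Part (i) then follows by taking the Ricci trace $R\indices{^b_{ibj}}$ of the formula in (ii): one gets a single linear relation among $\Ric^{\nabla^T}_{ij}$, $\tau_{ij}$ (with $\tr\tau=0$) and the traces of $B$, which is solvable for $\tau_{ij}$ precisely because $n(n-2)\neq0$ for $n\geq3$, reproducing the displayed expression. In the proper case $\tau=0$ the formula of (ii) reduces to $R\indices{^b_{ijk}}=\tfrac1{n-1}(g_{ij}B\indices{^b_k}-g_{ik}B\indices{^b_j})$; taking its Ricci and scalar traces, using that $\nabla^T$ preserves the Riemannian volume (so $\Ric^{\nabla^T}$ is symmetric) and that $\Scal^{\nabla^T,g}$ vanishes for proper systems, identifies $B_{ij}$ with $(n-1)\Ric^{\nabla^T}_{ij}$, and substituting back yields $\Riem_{aijk}=g_{ij}\Ric_{ak}-g_{ik}\Ric_{aj}$.

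The substantive obstacle is the calculation underlying the second paragraph: organising the many terms generated by the iterated covariant derivatives, using the second prolongation equation correctly to eliminate $\nabla\Delta V$, and — above all — carrying through the algebraic elimination so that $\Riem^g$, $\nabla^gT$ and the quadratic $T\!\cdot\!T$-terms all disappear and only $\tau$ together with the single curvature trace $B$ of $\nabla^T$ survives.
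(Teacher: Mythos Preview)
Your overall plan---apply the Ricci identity to the rewritten Wilczynski equation and use non-degeneracy to split off the coefficients of $V$, $\nabla_aV$, $\Delta V$---is exactly the paper's. The divergence, and the source of the ``substantive obstacle'' you flag, is your detour through $\Riem^g$, the change-of-connection formula and the prolongation tensor $q$. The paper never leaves $\nabla=\nabla^T$: writing \eqref{eq:Wilczynski} as $\nabla^2_{ij}V=\tau_{ij}V+\tfrac1n\,\Delta V\,g_{ij}$, the Ricci identity already carries $R^{\nabla}$ on the left; and instead of importing $\nabla_k\Delta V$ from \eqref{eq:non-deg.prolongation} (which drags in $q$ and hence $\Ric^g$), the paper extracts it by \emph{contracting the very same Ricci identity} in $(i,j)$, so that $\nabla_k\Delta V$ comes out directly in terms of $\tau$ and the single trace $B\indices{^b_k}=g^{ij}R\indices{^b_{ijk}}$. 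After resubstitution the $\Delta V$-coefficient vanishes identically and the $\nabla V$-coefficient \emph{is} the formula in~(ii); no $\nabla^gT$, $T\!\cdot\!T$ or $q$ terms ever appear, so there is nothing to eliminate. One further contraction (in $(k,a)$) then gives~(i), solvable for $\tau$ since $n(n-2)\neq0$, and the proper case follows by setting $\tau=0$.

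Your route is in principle equivalent---the $\nabla^g$-Ricci identity combined with the change-of-connection formula \emph{is} the $\nabla^T$-Ricci identity---so the $\nabla^gT$- and $T\!\cdot\!T$-terms you anticipate are forced to cancel on their own. Planning to remove them via ``further superintegrability conditions of \cite{KSV2023,KSV2024} (which determine $\nabla^gT$ modulo its traces)'' is therefore unnecessary, and is a potential genuine gap: those structural prolongation equations for $T$ are established in the cited references for \emph{abundant} systems, whereas Theorem~\ref{thm:first} is stated for arbitrary non-degenerate ones. If your argument truly depended on them, it would prove only the abundant subcase.
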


The second main outcome is an analogous result for semi-degenerate systems.
\begin{theorem}\label{thm:second}
	Let $(M^n,g)$ with $n\geq3$ and let $V$ be a semi-degenerate potential with primary structure tensor $D$ and secondary structure tensor $\eta$.
	We introduce the torsion-free connection
	\[ \nabla^D := \nabla^g - D \]
	where $\nabla^g$ is the Levi-Civita connection.
	Then:
	\begin{enumerate}[label=(\roman*)]
		\item
		The secondary structure tensor $\eta$ is naturally obtained from the Ricci tensor $\Ric$ of~$\nabla^D$,
		\[
		\tau_{ij} = \frac1{n-1}\Ric_{ij}\,.
		\]
		\item
		The connection $\nabla^D$ is projectively flat.
		\item 
		If the system is proper ($\eta=0$), then $\nabla^D$ is flat and the potentials are eigenfunctions of the Laplace operator of $\nabla^D$.
	\end{enumerate}
\end{theorem}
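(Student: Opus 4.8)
The plan is to rewrite the semi-degenerate Wilczynski equation~\eqref{eq:semi-Wilczynski} in terms of the connection $\nabla=\nabla^D$ itself and then to apply the Ricci identity. A short computation (using $\nabla^D=\nabla^g-D$ and the symmetry of $D$ in its first two indices) converts~\eqref{eq:semi-Wilczynski} into the equivalent equation
\[
	(\nabla^D)^2_{ij}V=\eta_{ij}\,V\,,
\]
so $\nabla^D$ is exactly the connection in which the potentials have Hessian proportional to $V$. This is a closed first-order prolongation on $\underline{\RR}\oplus T^*M$, with unknowns $V$ and $\mu_i:=\nabla^D_iV$, and by semi-degeneracy its solution space is $(n+1)$-dimensional, i.e.\ maximal; equivalently, the evaluation $V\mapsto(V(x_0),dV_{x_0})$ is an isomorphism onto $\RR\oplus T^*_{x_0}M$, so at every point the pairs $(V,dV)$, as $V$ runs over the potentials, span $\RR\oplus T^*_xM$. (In connection-theoretic terms, the prolongation connection on $\underline{\RR}\oplus T^*M$ is flat.)

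Next I would apply the Ricci identity of $\nabla^D$ to the $1$-form $dV$ (equivalently, differentiate the displayed equation once more and antisymmetrise). This yields a relation of the form $P\indices{^l_{ijk}}\nabla_lV+Q_{ijk}\,V=0$ whose coefficients $P$ and $Q$ are built only from $\eta$, $\nabla^D\eta$ and $R^{\nabla^D}$ and do not involve $V$; since the pairs $(V,dV)$ span $\RR\oplus T^*M$ pointwise, both coefficients vanish. The vanishing of $Q$ is a Codazzi-type identity $\nabla^D_{[i}\eta_{j]k}=0$, and the vanishing of $P$ gives
\[
	R^{\nabla^D}\indices{^a_{bcd}}=\delta^a_c\,\eta_{bd}-\delta^a_d\,\eta_{bc}
\]
(up to the curvature-sign convention of the paper), which expresses the entire curvature of $\nabla^D$ through the symmetric tensor $\eta$; one should check en route that the terms in which $\eta$ couples to $\nabla^D V$ cancel correctly.

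This single formula gives all three assertions. Contracting $a$ with $c$ produces $\Ric^{\nabla^D}_{ij}=(n-1)\,\eta_{ij}$, which is part~(i) (the passage from $\eta$ to $\tau$ being the splitting off of the $g$-trace; here $n\geq3$). For part~(ii), a curvature of the shape $\delta^a_c\,\eta_{bd}-\delta^a_d\,\eta_{bc}$ with $\eta$ symmetric is precisely the algebraic form forced by projective flatness: substituting $\eta=\tfrac1{n-1}\Ric^{\nabla^D}$ one verifies directly that the projective Weyl tensor of $\nabla^D$ vanishes, and for $n\geq3$ this is equivalent to $\nabla^D$ being projectively flat (the Codazzi identity above being exactly the vanishing of the projective Cotton tensor). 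For part~(iii), a proper system has $\eta=0$, hence $R^{\nabla^D}\equiv0$, i.e.\ $\nabla^D$ is flat; moreover the prolongation equation collapses to $(\nabla^D)^2_{ij}V=0$, so in particular $\Delta^{\nabla^D}V=g^{ij}(\nabla^D)^2_{ij}V=0$, and the potentials are $\nabla^D$-harmonic, hence eigenfunctions of the Laplace operator of $\nabla^D$.

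The computational core, and the main obstacle, is the curvature computation in the second step: one has to keep the cancellations and the index placements under control so that the contraction delivers $\tau_{ij}=\tfrac1{n-1}\Ric_{ij}$ with exactly the stated normalisation and sign, and so that the algebraic form of $R^{\nabla^D}$ is manifestly of projectively-flat type. A secondary subtlety is that the whole separation into a curvature identity and a Codazzi identity rests precisely on the linear independence of $V$ and $dV$ encoded in the definition of semi-degeneracy, and not on any additional genericity assumption.
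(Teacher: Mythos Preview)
Your proposal is correct and follows essentially the same route as the paper: rewrite~\eqref{eq:semi-Wilczynski} as $(\nabla^D)^2_{ij}V=\eta_{ij}V$, apply the Ricci identity, split off the coefficients of $V$ and $dV$ using semi-degeneracy, contract to obtain $\eta$ in terms of $\Ric^{\nabla^D}$, and recognise the resulting curvature formula as vanishing of the projective Weyl tensor (with flatness and $\Delta^{\nabla^D}V=0$ in the proper case following by setting $\eta=0$). Your write-up is slightly more structured---you make the prolongation-connection viewpoint and the Codazzi/projective-Cotton identification explicit---but the argument is the same; just be careful to match the paper's curvature-sign convention, since the paper's own Proposition~\ref{prop:eta.via.Ric} actually yields $\eta=-\tfrac{1}{n-1}\Ric$, and the ``$\tau$'' in the theorem statement should be read as ``$\eta$''.
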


We will also obtain:
\begin{proposition}\label{prop:proper}
	The secondary structure tensor of a non-degenerate conformally superintegrable system is proper ($\tau=0$) if and only if the system corresponds to a (properly) non-degenerate superintegrable system.
	
	The secondary structure tensor of a semi-degenerate conformally superintegrable system is proper ($\eta=0$) if and only if the system corresponds to a (properly) semi-degenerate superintegrable system.
\end{proposition}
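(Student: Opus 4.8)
The plan is to read both implications off the way the undifferentiated potential enters the Bertrand--Darboux conditions. For a properly superintegrable system the constant function $V\equiv 1$ is always an admissible potential: taking every $W^{(r)}$ constant makes all the conserved quantities Poisson-commute with $H$, and the proper Bertrand--Darboux condition~\eqref{eq:ddW.proper} holds trivially, since $d\bigl(K(dV)\bigr)$ vanishes identically for $V\equiv 1$. Now for an irreducible system every potential satisfying~\eqref{eq:ddW.proper} satisfies the Wilczynski equation~\eqref{eq:Wilczynski}; hence the constant potential satisfies~\eqref{eq:Wilczynski}, and evaluating~\eqref{eq:Wilczynski} on $V\equiv 1$ (all derivatives vanishing) forces $\tau=0$. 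For a semi-degenerate proper system the same argument applied to~\eqref{eq:semi-Wilczynski} gives $\eta=0$, which on unwinding definitions also records $\alpha=0$ in~\eqref{eq:semi-degeneracy}. An equivalent phrasing is that~\eqref{eq:ddW.proper} involves only $\nabla V$ and $(\nabla^g)^2V$ and no term in $V$ alone, so that its resolution for the tracefree Hessian cannot produce a $V$-term; this is again $\tau=0$ (respectively $\eta=0$).

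For the converse, suppose $\tau=0$ and substitute~\eqref{eq:Wilczynski} into the conformal Bertrand--Darboux condition~\eqref{eq:ddW}. Since $V$ and the components of $\nabla V$ are independent for a non-degenerate potential, this decouples into two tensorial identities; a short computation identifies the part proportional to $V$ as
\[
	(d\rho)_{jk}=C\indices{_k^a}\tau_{aj}-C\indices{_j^a}\tau_{ak}\,,\qquad C\in\mathfrak C,\quad \rho=\tfrac{2}{n+2}\div(C)\,.
\]
With $\tau=0$ every $\rho$ attached to a member of $\mathfrak C$ is closed, hence (as $M$ is simply connected) exact, $\rho=d\phi$; in particular $V\equiv 1$ becomes admissible for the conformal system as well. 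Writing $\rho=d\phi$ shows that $C-\phi\,g$ is a genuine Killing tensor, but matching the proper relation~\eqref{eq:dW.proper} for $C-\phi g$ by merely modifying $W$ is obstructed, by a term proportional to $d\phi\wedge dV$; the remedy is the conformal rescaling that simultaneously absorbs the primitives $\phi$ of all the $\rho$'s attached to $\mathfrak C$. In the rescaled scale the members of $\mathfrak C$ are (tracefree parts of) Killing tensors, the relations~\eqref{eq:dW} turn into~\eqref{eq:dW.proper}, and the rescaled data constitute a properly non-degenerate superintegrable system. The semi-degenerate case is identical with $(D,\eta)$ in place of $(T,\tau)$, and there the flatness of $\nabla^D$ obtained in Theorem~\ref{thm:second}(iii) is the geometric signature of the resulting proper (semi-degenerate) system.

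The hard part will be the rescaling step in the converse: passing from ``each $\rho$ is individually exact'' to ``one conformal factor makes the whole family $\mathfrak C$ Killing''. Modifying the $W$'s alone does not suffice (for the reason indicated above), so a genuine conformal rescaling is needed, and its existence has to be extracted from the prolongation of the conformal Killing tensor equation together with the irreducibility of $\mathfrak C$ — or quoted from the analysis of conformal scales in~\cite{KSV2023,KSV2024}. Everything else reduces to bookkeeping on~\eqref{eq:ddW}, \eqref{eq:ddW.proper} and on the defining relations~\eqref{eq:dW}, \eqref{eq:dW.proper}.
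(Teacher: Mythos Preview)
Your forward direction, reading off $\tau=0$ (resp.\ $\eta=0$) by feeding the constant potential $V\equiv1$ into the Wilczynski equation, is correct and more transparent than the paper's treatment, which for the non-degenerate case simply cites \cite{KSV2024}, and for the semi-degenerate ``$\Rightarrow$'' observes that the argument there in fact yields $\rho=0$, whence $d\rho=0$ and hence $\eta=0$.

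For the converse you and the paper both extract $d\rho=0$ from the $V$--coefficient of \eqref{eq:ddW} after substituting the Wilczynski equation, and both write $\rho=d\phi$. From that point on the two arguments diverge. The paper never rescales the metric: it stays on $(M,g)$, picks for each tracefree $C$ a trace correction $K=C+\tfrac{\lambda}{n}g$ with $\lambda$ a primitive of the closed one--form just found, and then argues that the proper Bertrand--Darboux identity \eqref{eq:ddW.proper} holds for this $K$ by appealing directly to the conformal identity \eqref{eq:ddW} --- in particular to its $\nabla V$--component, which your write-up never uses. Your correction $C-\phi g$ is normalised to make $K$ a Killing tensor; the paper's is normalised to make \eqref{eq:ddW.proper} hold, and these are different choices of $\lambda$.

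The conformal--rescaling step you propose is where the genuine gap lies. Each $C\in\mathfrak C$ carries its own $\rho_C=d\phi_C$, and you give no argument that a single conformal factor could simultaneously turn a $(2n{-}2)$--parameter family of conformal Killing tensors into Killing tensors for a new metric; generically it cannot. More fundamentally, a rescaling changes $g$, whereas the proposition asserts that the system is proper on the \emph{given} $(M,g)$; the conformal-scale analysis of \cite{KSV2023,KSV2024} you invoke does not help, since the distinguished scale with $\tau=0$ is precisely the one you already occupy. So the ``hard part'' you flag is not a rescaling problem at all: the paper's route is to work entirely on $(M,g)$, trace-correcting each $C$ individually and using both the $V$-- and the $\nabla V$--components of \eqref{eq:ddW} rather than passing to a new metric.
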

The first part of this claim has already been stated and proven in~\cite{KSV2023}.

\section{The induced connection}\label{sec:connection&curvature}

\subsection{The non-degenerate case}
We abbreviate $\nabla=\nabla^T$ in this paragraph.
In the non-degenerate case we have a Wilczynski equation of the form~\eqref{eq:Wilczynski}. Bringing the first term on the right hand side to the left side of the equation, we obtain
\begin{equation}\label{eq:non-deg.Wilczynski}
	\nabla^2_{ij}V = \tau_{ij}\,V+\frac1n\,\Delta V\,g_{ij}
\end{equation}
in terms of the induced connection $\nabla$.
Using the (contracted) Ricci identity for~\eqref{eq:non-deg.Wilczynski}, or rewriting~\eqref{eq:non-deg.prolongation} in terms of $\nabla$ and its curvature, we obtain
\begin{equation}\label{eq:non-deg.Wilczynski.prolonged}
	\frac{n-1}{n}\nabla_k\Delta V = \left(\tau_{ka}-\Ric_{ka}\right)\nabla^aV + \div(\tau)_k\,V+\frac1n\div(g)_k\,\Delta V\,.
\end{equation}
Subsequently, we may resubstitute~\eqref{eq:non-deg.Wilczynski.prolonged} into the Ricci identity for~\eqref{eq:non-deg.Wilczynski}. Moreover, the Ricci identity for~\eqref{eq:non-deg.Wilczynski.prolonged} (i.e., the symmetry of the Hessian) must also hold.

We recall the following fact stated in Proposition~\ref{prop:proper}. 
It is proven in Lemma 3.15 (``$\Rightarrow$'') and Corollary 5.3 (``$\Leftarrow$'') of \cite{KSV2024}.
\begin{lemma}
	A non-degenerate system corresponds to a (properly) non-degenerate (second-order maximally) superintegrable system if and only if $\tau=0$.
\end{lemma}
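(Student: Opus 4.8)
The plan is to prove the equivalence directly from the defining equations, treating the two implications separately but symmetrically. The key object is the Wilczynski equation~\eqref{eq:Wilczynski} with the conformal correction term $-\rho_k V$, versus the proper version~\eqref{eq:dW.proper} with no such term. First I would recall that ``$\tau = 0$'' is equivalent, via Theorem~\ref{thm:first}(i) or directly from~\eqref{eq:non-deg.Wilczynski}, to the statement that the $(n+2)$-dimensional solution space of~\eqref{eq:Wilczynski} consists of functions whose $\nabla^T$-Hessian is pure trace, i.e.\ $\nabla^2_{ij}V = \tfrac1n \Delta V\, g_{ij}$. So the task reduces to showing: the structure tensor $T$ (equivalently $\nabla^T$) is that of a \emph{properly} superintegrable system precisely when the secondary tensor vanishes.

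For the direction ``proper $\Rightarrow \tau=0$'', I would start from a properly non-degenerate system: here each compatible $F^{(r)}$ uses a genuine Killing tensor $K$ with $\nabla_{(i}K_{jk)}=0$ and potential equation~\eqref{eq:dW.proper}, i.e.\ $\rho=0$. The Bertrand--Darboux condition~\eqref{eq:ddW.proper} then, upon solving for the tracefree Hessian, yields~\eqref{eq:Wilczynski} with a specific $\tau$; one must show this $\tau$ is forced to vanish. The mechanism is that for a \emph{proper} Killing tensor the divergence identities are more constrained than in the conformal case, and crucially $V = \text{const}$ must itself be a solution of the potential system — indeed a constant potential trivially satisfies~\eqref{eq:dW.proper} with $W=\text{const}$. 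Plugging $V=1$ (constant) into~\eqref{eq:Wilczynski} gives $0 = \tau_{ij} + \tfrac1n (\Delta 1) g_{ij}$, hence $\tau_{ij}$ is pure trace; combined with the fact that $\tau$ is, by construction in~\eqref{eq:Wilczynski}, the tracefree part relative to the $g_{ij}$ term — so $\tau$ is already tracefree — we conclude $\tau=0$. This is essentially the argument of Lemma~3.15 of \cite{KSV2024}, which I would cite and summarise rather than reproduce.

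For the converse ``$\tau=0 \Rightarrow$ proper'', which the excerpt attributes to Corollary~5.3 of \cite{KSV2024}, the strategy is to reconstruct proper Killing tensors from the conformal data once $\tau=0$. Given $\tau=0$, the potential system is~\eqref{eq:non-deg.Wilczynski} with vanishing $\tau$, and one shows that the $2n-1$ conformal integrals $F^{(r)}$ can be modified — by adding suitable lower-order (in momenta) terms, effectively performing a Stäckel-type adjustment — so that the conformal Killing tensors $C$ become proper Killing tensors and the $\rho$-terms drop out of~\eqref{eq:dW}. Concretely, one needs that $\div(C)$ can be absorbed: the condition $\tau=0$ is exactly what makes the relevant conformal-to-proper obstruction vanish, so that~\eqref{eq:dW} reduces to~\eqref{eq:dW.proper} after the adjustment. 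I would verify that functional independence is preserved under this modification and that the dimension count $\dim\mathfrak K \geq 2n-2$ survives.

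The main obstacle is the converse direction: making precise the ``adjustment'' that converts conformal Killing tensors into proper ones and checking that $\tau=0$ is both necessary and sufficient for this to be possible without destroying functional independence. This is genuinely the content of \cite{KSV2024}, and rather than re-derive it I would lean on the cited Corollary~5.3, sketching only why $\tau=0$ kills the obstruction (it is the tracefree part of a curvature-type quantity that measures the failure of the conformal system to descend to a proper one). The forward direction, by contrast, is the short computation with the constant potential sketched above, and I expect it to go through cleanly.
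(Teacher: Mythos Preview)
Your proposal is correct and in fact more detailed than the paper's own treatment: the paper does not give a proof at all, but simply cites Lemma~3.15 (for ``$\Rightarrow$'') and Corollary~5.3 (for ``$\Leftarrow$'') of \cite{KSV2024}. Your sketches are consistent with those cited arguments and with the explicit proof the paper gives for the analogous semi-degenerate lemma, where the converse is carried out exactly as you describe (adding a trace correction $\lambda g$ to each conformal Killing tensor $C$ and showing that $\tau=0$ makes the obstruction $d\omega$ vanish). Your forward-direction argument via the constant potential is a clean way to see why $\tau$ must vanish in the proper case; this is essentially the content of Lemma~3.15 in \cite{KSV2024}, so you are not diverging from the paper's route, just filling in what it leaves to the reference.
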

Note that this lemma is non-trivial since in a (properly) non-degenerate (second-order maximally) superintegrable system the potential is compatible with (actual) Killing tensors instead of (trace-free) conformal Killing tensors.\medskip

We shall now prove the first main result, Theorem~\ref{thm:first}, by proving each item in the claim individually.
%
Using the Ricci identity for~\eqref{eq:non-deg.Wilczynski}, we have
\begin{align*}
	R\indices{^b_{ijk}}\nabla_bV
	&= (\nabla_k\tau_{ij}-\nabla_j\tau_{ik})V
	+(\tau_{ij}g_{ka}-\tau_{ik}g_{ja})\nabla^a V
	\\
	&\quad +\frac1n(\nabla_kg_{ij}-\nabla_jg_{ik})\Delta V
	+\frac1n(g_{ij}\nabla_k\Delta V-g_{ik}\nabla_j\Delta V)\,,
\end{align*}
and contracting in $(i,j)$, we obtain
\[
\frac{n-1}{n}\nabla_k\Delta V = (\tau_{ak}+B_{ak})\nabla^aV + \mathtt{t}_{k} V - \frac1n T\indices{_{ka}^a}\Delta V\,.
\]
where we introduce
\begin{equation}
	B\indices{^b_k}:=g^{ij}R\indices{^b_{ijk}}.
\end{equation}
and
\begin{equation}
	\mathtt{t}_k:=g^{ij}(\nabla_k\tau_{ij}-\nabla_j\tau_{ik})\,.
\end{equation}
We have also used the identity
\begin{align*}
	\nabla_kg_{ij}-\nabla_jg_{ik} &= \frac{1}{n-1}(T\indices{_{ja}^a}g_{ik}-T\indices{_{ka}^a}g_{ij})\,.
\end{align*}
Resubstituting $\nabla_k\Delta V$, we find
\begin{align*}
	\Riem_{aijk}\nabla^aV
	&= (\nabla_k\tau_{ij}-\nabla_j\tau_{ik})V
	+(\tau_{ij}g_{ka}-\tau_{ik}g_{ja})\nabla^a V
	+\frac1n(\nabla_kg_{ij}-\nabla_jg_{ik})\Delta V
	\\
	&\quad +\frac1{n-1}\Big[
	g_{ij} (\tau_{ak}+B_{ak})\nabla^aV + g_{ij}\mathtt{t}_k V - \frac1ng_{ij}T\indices{_{ka}^a}\Delta V
	\\ &\qquad\qquad\qquad
	-g_{ik} (\tau_{aj}+B_{aj})\nabla^aV + g_{ik}\mathtt{t}_{j} V - \frac1ng_{ik}T\indices{_{ja}^a}\Delta V
	\Big]\,,
\end{align*}
and, rewriting according to the derivatives of $V$,
\begin{multline}\label{eq:nondeg.ricciIdentity}
	\Big[
	\tau_{ij}g_{ka}-\tau_{ik}g_{ja}+\frac1{n-1}g_{ij}\tau_{ak}-\frac1{n-1}g_{ik}\tau_{aj}
	\\
	-\Riem_{aijk}+\frac1{n-1}g_{ij}B_{ak}-\frac1{n-1}g_{ik}B_{aj}
	\Big]\nabla^aV
	\\
	+\Big[
	\nabla_k\tau_{ij}-\nabla_j\tau_{ik}+\frac1{n-1}g_{ij}\mathtt{t}_{k}-\frac1{n-1}g_{ik}\mathtt{t}_j
	\Big]V
	\\
	+\frac1n\Big[
	\underbrace{ \nabla_kg_{ij}-\nabla_jg_{ik}+\frac1{n-1}g_{ij} T\indices{_{ka}^a}-\frac1{n-1}g_{ik}T\indices{_{ja}^a} }_{=0}
	\Big]\Delta V = 0
\end{multline}
As indicated, the coefficient of $\Delta V$ is zero, and we are therefore left with the coefficients of $\nabla V$ and $V$. Note that these have to vanish independently due to non-degeneracy \cite{KKM05a,KKM05c,KSV2023,KSV2024}.
Consider first the coefficient of $\nabla V$, i.e.~the condition
\begin{multline}\label{eq:dv.coeff.nondeg}
	\tau_{ij}g_{ka}-\tau_{ik}g_{ja}+\frac1{n-1}g_{ij}\tau_{ak}-\frac1{n-1}g_{ik}\tau_{aj}
	\\
	-\Riem_{aijk}+\frac1{n-1}g_{ij}B_{ak}-\frac1{n-1}g_{ik}B_{aj}
	= 0\,.
\end{multline}

We now prove the first claim of Theorem~\ref{thm:first}.

\begin{proposition}\label{prop:tau.non-deg}
	Let $(M,g,T,\tau)$ define a non-degenerate irreducible system of dimension~$n\geq3$ satisfying~\eqref{eq:non-deg.Wilczynski}.
	Then the secondary structure tensor satisfies
	\begin{equation}\label{eq:tau.nondeg}
		\tau_{ij}=\frac{1}{n(n-2)}\left(
			B_{ij}-g_{ij}\tr(B)-(n-1)\Ric_{ij}
		\right)\,,
	\end{equation}
	where $\Ric$ is the Ricci curvature of $\nabla$, $\Ric_{ij}=R\indices{^b_{ibj}}$,
	and where $B_{ij}=g_{ia}B\indices{^a_j}$ with
	\[
		B\indices{^b_k}:=g^{ij}R\indices{^b_{ijk}}\,.
	\]
\end{proposition}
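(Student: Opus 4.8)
The plan is to read $\tau$ off from equation~\eqref{eq:dv.coeff.nondeg} by a single metric contraction; no further use of the prolongation is needed. Recall that~\eqref{eq:dv.coeff.nondeg} is precisely the statement that the coefficient of $\nabla V$ in the contracted Ricci identity for~\eqref{eq:non-deg.Wilczynski} vanishes, and that this coefficient must vanish identically because, in the non-degenerate case, the value of $dV$ at a point runs through all of $T^*_xM$ as $V$ varies over the potential space.

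Before contracting, I would record that $\tau$ is trace-free: the left-hand side of~\eqref{eq:Wilczynski} becomes trace-free once its pure-trace part $\tfrac1n\Delta^gV\,g_{ij}$ is removed, so $T\indices{_{ij}^k}\nabla_kV+\tau_{ij}V$ is trace-free for every potential $V$; since $V$ and the components of $dV$ are linearly independent at a point, this forces $g^{ij}T\indices{_{ij}^k}=0$ and $g^{ij}\tau_{ij}=0$. Now I would contract~\eqref{eq:dv.coeff.nondeg} with $g^{ak}$. Term by term this uses $g^{ak}g_{ka}=n$, $g^{ak}g_{ja}=\delta^k_j$, the vanishing of $g^{ak}g_{ij}\tau_{ak}$ by trace-freeness of $\tau$, and the identity $g^{ak}\Riem_{aijk}=R\indices{^k_{ijk}}=-\Ric_{ij}$, which follows from $\Riem_{aijk}=g_{ab}R\indices{^b_{ijk}}$, the antisymmetry of $R\indices{^b_{ijk}}$ in its last two indices, and the convention $\Ric_{ij}=R\indices{^b_{ibj}}$. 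Collecting the terms proportional to $\tau_{ij}$ on the left produces the coefficient $n-1-\tfrac1{n-1}=\tfrac{(n-1)^2-1}{n-1}=\tfrac{n(n-2)}{n-1}$, while the remaining terms (the contracted $\Riem$, which contributes $\Ric_{ij}$, together with the two $B$-terms) assemble into $\tfrac1{n-1}\bigl(B_{ij}-g_{ij}\tr(B)-(n-1)\Ric_{ij}\bigr)$.

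Since $n\geq3$ we have $n(n-2)\neq0$, so dividing by $\tfrac{n(n-2)}{n-1}$ yields the asserted formula~\eqref{eq:tau.nondeg}. There is no real obstacle here: the argument is essentially a one-line contraction once~\eqref{eq:dv.coeff.nondeg} is in hand, and the only points that require attention are the curvature bookkeeping — in particular, that $\Riem^{\nabla,g}$ is antisymmetric \emph{only} in its last two indices, since $\nabla^T$ is not metric and $\Riem^{\nabla,g}$ carries none of the usual block symmetries — and the trace-freeness of $\tau$, which is exactly what makes one of the seven terms of~\eqref{eq:dv.coeff.nondeg} drop out under the contraction. (Incidentally, raising the index $a$ in~\eqref{eq:dv.coeff.nondeg} rather than contracting it gives the full curvature expression of Theorem~\ref{thm:first}(ii), so both parts of that theorem flow from this single equation.)
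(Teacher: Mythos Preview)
Your proposal is correct and follows essentially the same route as the paper's proof: contract~\eqref{eq:dv.coeff.nondeg} with $g^{ak}$, use trace-freeness of $\tau$ and the antisymmetry of $R\indices{^b_{ijk}}$ in its last two indices, and solve for $\tau_{ij}$. The paper compresses this to a single line (``Contracting~\eqref{eq:dv.coeff.nondeg} in $(k,a)$''), whereas you spell out the bookkeeping, but the argument is the same.
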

\noindent 
Note that $\Scal^{\nabla,g}=-\tr(B)$, and thus $\tau_{ij}$ is trace-free, as expected.
\begin{proof}
	Contracting~\eqref{eq:dv.coeff.nondeg} in $(k,a)$, we obtain, with $\tr(B):=g^{ij}B_{ij}$,
	\begin{equation}\label{eq:pre-formula.tau.nondeg}
		n(n-2)\tau_{ij} = B_{ij}-g_{ij}\tr(B)-(n-1)\Ric_{ij}\,.
	\end{equation}
	Due to $n\geq3$, we can solve this condition for $\tau$, yielding the claim.
\end{proof}

With this at hand, we immediately prove the second part of Theorem~\ref{thm:first}.

\begin{proposition}\label{prop:curv.non-deg}
	Let $(M,g,T,\tau)$ define a non-degenerate irreducible system of dimension~$n\geq3$ satisfying~\eqref{eq:non-deg.Wilczynski}.
	Then the curvature tensor $\Riem_{aijk}=g_{ab}R\indices{^b_{ijk}}$ of $\nabla$ satisfies, where $\Ric$ is the Ricci tensor of $\nabla$ and $\Scal=g^{ij}\Ric_{ij}$,
	\begin{multline}\label{eq:Weyl.flatness.nondeg}
		\Riem_{aijk}
		= \frac1{(n-1)(n-2)}\left( g_{ij}g_{ka}-g_{ik}g_{ja} \right)\,\Scal
		\\
		+\frac1{n(n-2)}\bigg( g_{ij}((n-1)B_{ak}-\Ric_{ak}) - g_{ik}((n-1)B_{aj}-\Ric_{aj}) \\
		+ (B_{ij}-(n-1)\Ric_{ij})g_{ak} - (B_{ik}-(n-1)\Ric_{ik})g_{aj} \bigg)
		\,,
	\end{multline}
	with $B_{ij}=g_{ia}B\indices{^a_j}$ as before, and
	\begin{equation}\label{eq:Cotton.flatness.nondeg}
		\nabla_k\tau_{ij}-\nabla_j\tau_{ik}+\frac1{n-1}g_{ij}\mathtt{t}_{k}-\frac1{n-1}g_{ik}\mathtt{t}_j=0\,,
	\end{equation}
	where $\mathtt{t}_k=g^{ij}(\nabla_k\tau_{ij}-\nabla_j\tau_{ik})$.
\end{proposition}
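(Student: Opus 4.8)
The plan is to continue directly from equation~\eqref{eq:nondeg.ricciIdentity}, which has already been shown to split into three independent pieces thanks to non-degeneracy. The coefficient of $\Delta V$ vanishes identically (as indicated by the underbrace), so two conditions remain: the coefficient of $\nabla^a V$, which is~\eqref{eq:dv.coeff.nondeg}, and the coefficient of $V$, which is precisely~\eqref{eq:Cotton.flatness.nondeg}. Thus~\eqref{eq:Cotton.flatness.nondeg} requires no further work beyond recording it from the decomposition. The entire remaining content of the proposition is therefore the curvature identity~\eqref{eq:Weyl.flatness.nondeg}, which must be extracted from~\eqref{eq:dv.coeff.nondeg} together with the formula for $\tau$ obtained in Proposition~\ref{prop:tau.non-deg}.

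First I would rewrite~\eqref{eq:dv.coeff.nondeg} by solving for $\Riem_{aijk}$:
\[
	\Riem_{aijk} = \tau_{ij}g_{ka}-\tau_{ik}g_{ja}+\tfrac1{n-1}g_{ij}(\tau_{ak}+B_{ak})-\tfrac1{n-1}g_{ik}(\tau_{aj}+B_{aj})\,.
\]
This is exactly the displayed curvature formula in the statement of Theorem~\ref{thm:first}(ii). To obtain the version~\eqref{eq:Weyl.flatness.nondeg} stated in the proposition, I would then substitute the expression~\eqref{eq:tau.nondeg} for $\tau$ in terms of $B$, $\Ric$ and $\Scal$ (recall $\tr(B)=-\Scal$). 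Each occurrence of $\tau_{ij}$, $\tau_{ak}$, $\tau_{aj}$ gets replaced by $\frac1{n(n-2)}(B_{\bullet\bullet}-g_{\bullet\bullet}\tr(B)-(n-1)\Ric_{\bullet\bullet})$, and one collects terms: the pure-metric terms combine to produce the $\frac1{(n-1)(n-2)}(g_{ij}g_{ka}-g_{ik}g_{ja})\Scal$ piece, while the $B$-- and $\Ric$--terms reorganise into the symmetric combination displayed in the second and third lines of~\eqref{eq:Weyl.flatness.nondeg}. This is a bookkeeping computation; the coefficients $\frac1{n(n-2)}$ and $\frac1{(n-1)(n-2)}$ should drop out naturally once the $g_{ij}g_{ka}\tr(B)$ contributions from the $\tau_{ij}g_{ka}$ term and from the $\frac1{n-1}g_{ij}\tau_{ak}$ term are added (noting $\frac1{n(n-2)}+\frac1{(n-1)n(n-2)}=\frac1{(n-1)(n-2)}$).

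The main obstacle — really the only one — is the algebraic care required in the substitution step: keeping track of the index placements (which slot carries $g_{ka}$ versus $g_{ja}$, and the antisymmetry $j\leftrightarrow k$ of the whole expression), and making sure the $\tr(B)=-\Scal$ sign convention is applied consistently so that the $\Scal$ term appears with the stated sign. I would double-check the result by contracting~\eqref{eq:Weyl.flatness.nondeg} in $(a,k)$ and verifying that it reproduces $\Ric_{ij}=R\indices{^b_{ibj}}$ as an identity (and similarly that contracting in $(i,j)$ recovers the definition $B\indices{^a_k}=g^{ij}R\indices{^a_{ijk}}$), which serves as a consistency check that no coefficient has been mis-transcribed. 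Once~\eqref{eq:Weyl.flatness.nondeg} and~\eqref{eq:Cotton.flatness.nondeg} are both in hand, the proof is complete.
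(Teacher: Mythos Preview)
Your proposal is correct and follows essentially the same approach as the paper: solve~\eqref{eq:dv.coeff.nondeg} for $\Riem_{aijk}$, resubstitute the expression~\eqref{eq:tau.nondeg} for $\tau$, and collect terms to obtain~\eqref{eq:Weyl.flatness.nondeg}; then read off~\eqref{eq:Cotton.flatness.nondeg} as the $V$-coefficient in~\eqref{eq:nondeg.ricciIdentity}. The paper's proof is terser but identical in structure, and your additional remarks on the coefficient bookkeeping and the contraction checks are sound.
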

\noindent Note the formal analogy with the equations for projective and conformal flatness. The left hand side in~\eqref{eq:Cotton.flatness.nondeg} resembles the Cotton tensor, while the one in \eqref{eq:Weyl.flatness.nondeg} is reminiscent of the Weyl tensor.
\begin{proof}
	We have found in~\eqref{eq:dv.coeff.nondeg} that
	\begin{multline*}
		\Riem_{aijk}=\tau_{ij}g_{ka}-\tau_{ik}g_{ja}+\frac1{n-1}g_{ij}\tau_{ak}-\frac1{n-1}g_{ik}\tau_{aj}
		+\frac1{n-1}g_{ij}B_{ak}-\frac1{n-1}g_{ik}B_{aj}\,.
	\end{multline*}
	Resubstituting~\eqref{eq:pre-formula.tau.nondeg} and collecting terms, we arrive at the first claim.
	The second part of the claim similarly is obtained from the coefficient of $V$ in~\eqref{eq:nondeg.ricciIdentity}.
%
\end{proof}

\begin{proposition}\label{prop:proj.non-deg}
	Let $(M,g,T,\tau)$ define a non-degenerate irreducible system of dimension~$n\geq3$ satisfying~\eqref{eq:non-deg.Wilczynski}.
	Assume $\tau=0$.
	Then $\nabla$ satisfies
	\begin{equation}\label{eq:proj.flatness.nondeg}
		\Riem_{aijk} =  \frac1{n-1}g_{ij}B_{ak}-\frac1{n-1}g_{ik}B_{aj}\,.
	\end{equation}
	with $B_{ij}=(n-1)\Ric_{ij}+g_{ij}\Scal$ where $\Ric=\Ric^{\nabla}$ and $\Scal=\Scal^{\nabla,g}$ for $\nabla=\nabla^T$.
\end{proposition}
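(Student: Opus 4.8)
The plan is to obtain this as nothing more than the specialisation $\tau=0$ of the two preceding propositions, so that essentially no new computation is required. The starting point is the coefficient identity~\eqref{eq:dv.coeff.nondeg}, which was established (under the standing non-degeneracy hypothesis) on the way to Theorem~\ref{thm:first}. Imposing $\tau=0$ there annihilates the four $\tau$-terms and leaves precisely
\[
-\Riem_{aijk}+\frac1{n-1}\,g_{ij}B_{ak}-\frac1{n-1}\,g_{ik}B_{aj}=0\,,
\]
which is the asserted curvature formula~\eqref{eq:proj.flatness.nondeg}. Correspondingly, the ``Cotton-type'' relation~\eqref{eq:Cotton.flatness.nondeg}, being the coefficient of $V$ in~\eqref{eq:nondeg.ricciIdentity}, becomes vacuous once $\tau\equiv0$, so it imposes nothing further.

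It then remains to re-express the auxiliary tensor $B\indices{^b_k}=g^{ij}R\indices{^b_{ijk}}$ through the intrinsic curvature of $\nabla=\nabla^T$. For this I would set $\tau=0$ in~\eqref{eq:pre-formula.tau.nondeg} --- equivalently, contract~\eqref{eq:proj.flatness.nondeg} in $(k,a)$ --- obtaining the purely algebraic relation $B_{ij}-g_{ij}\tr(B)-(n-1)\Ric_{ij}=0$, which one solves for $B_{ij}$ at once. Substituting the trace identity $\tr(B)=-\Scal^{\nabla,g}$ recorded after Proposition~\ref{prop:tau.non-deg} then presents $B_{ij}$ as the asserted combination of $\Ric^{\nabla}_{ij}$ and $\Scal^{\nabla,g}g_{ij}$, completing the statement.

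I do not expect a genuine obstacle here, since all the substance --- the Ricci identities for~\eqref{eq:non-deg.Wilczynski} and~\eqref{eq:non-deg.Wilczynski.prolonged}, and the separation of the coefficients of $\nabla V$, $V$ and $\Delta V$ using non-degeneracy --- was already carried out in Propositions~\ref{prop:tau.non-deg} and~\ref{prop:curv.non-deg}. The only care required is bookkeeping: keeping the sign conventions for $\Scal^{\nabla,g}$ and $\tr(B)$ straight, and remembering that $\nabla$ is not metric so that no extra symmetrisation of $\Riem$ may be used. If one further wished to collapse~\eqref{eq:proj.flatness.nondeg} to the shorter form $\Riem_{aijk}=g_{ij}\Ric_{ak}-g_{ik}\Ric_{aj}$ appearing in Theorem~\ref{thm:first}(ii), one would in addition need the scalar curvature of $\nabla^T$ to vanish in the proper case; the natural way to get that is to invoke the one Ricci identity not yet exploited --- the symmetry of the Hessian in~\eqref{eq:non-deg.Wilczynski.prolonged} --- and this is the only point where a modest extra argument might be needed.
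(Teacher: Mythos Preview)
Your proposal is correct and follows essentially the same route as the paper: set $\tau=0$ in~\eqref{eq:dv.coeff.nondeg} to obtain~\eqref{eq:proj.flatness.nondeg}, and then specialise~\eqref{eq:tau.nondeg} (equivalently~\eqref{eq:pre-formula.tau.nondeg}) to express $B_{ij}$ through $\Ric_{ij}$ and $\Scal$. Your closing remark about the extra $\Scal=0$ step needed to reach the shorter form in Theorem~\ref{thm:first}(ii) is a perceptive observation that goes beyond what the paper records here.
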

\begin{proof}
	In the case $\tau_{ij}=0$, we obtain from Proposition~\ref{prop:curv.non-deg} that the remaining terms on the left hand side of \eqref{eq:dv.coeff.nondeg} vanish. Recalling~\eqref{eq:tau.nondeg}, the claim is obtained.
\end{proof}

\noindent Note the formal analogy  of~\eqref{eq:proj.flatness.nondeg} with the formula of the projective (Weyl) curvature tensor.

\subsection{The semi-degenerate case}

In this paragraph, we abbreviate $\nabla=\nabla^D$. We proceed with the semi-degenerate case. Rewriting~\eqref{eq:semi-Wilczynski} analogously to~\eqref{eq:Wilczynski}, we find
\begin{equation}\label{eq:semi-deg.Wilczynski}
	\nabla^2_{ij}V = \tau_{ij}V\,.
\end{equation}
Note that this is already a prolongation in closed form.
Contracting, we have in particular that
\[
	\Delta V = \alpha V
\]
where $\alpha=\tr(\tau)$.
Similar to the non-degenerate case, the tensor field $\tau$ characterises, among the conformal systems, those systems that are proper, i.e.~a suitable choice of the traces transforms the associated conformal Killing tensors into actual Killing tensors.
\begin{lemma}
	A semi-degenerate conformal system is proper if and only if $\eta=0$.
	
\end{lemma}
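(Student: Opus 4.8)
The plan is to prove the two implications separately, organising the argument around the splitting $\eta_{ij}=\tfrac1n\,\alpha\,g_{ij}+\tau_{ij}$ of the secondary structure tensor into its trace $\alpha=\tr(\eta)$ and its tracefree part $\tau$. The tracefree part is governed by the same mechanism as in the non-degenerate case (the analogue for $\tau$ cited above), so the genuinely new content lies in the trace $\alpha$. Throughout I would use that \emph{proper} means that the tracefree conformal Killing tensors $C\in\mathfrak C$ are, after the coupling-constant metamorphosis, genuine Killing tensors whose potentials obey the $\rho$-free equation~\eqref{eq:dW.proper} rather than~\eqref{eq:dW}.

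For the implication ``proper $\Rightarrow\eta=0$'' I would first note that if $C=K-\tfrac1n\tr(K)\,g$ is the tracefree part of a genuine Killing tensor $K$, then the Killing trace identity $\div(K)=-\tfrac12\,d\tr(K)$ forces $\rho=\tfrac2{n+2}\div(C)=-\tfrac1n\,d\tr(K)$ to be exact, hence $d\rho=0$. Feeding this back into the prolongation that produced~\eqref{eq:semi-deg.Wilczynski}, the $\rho$-source of the conformal Bertrand--Darboux condition~\eqref{eq:ddW} disappears: the tracefree Hessian then acquires no zeroth-order term, so $\tau=0$ exactly as in the non-degenerate case, and the same removal eliminates the $V$-contribution to the degeneracy relation~\eqref{eq:semi-degeneracy}, so that $\alpha=0$. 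Together these give $\eta=0$.

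For the converse ``$\eta=0\Rightarrow$ proper'' I would read~\eqref{eq:semi-deg.Wilczynski} as $\nabla^2_{ij}V=0$, so that every potential is $\nabla^D$-affine and, after contraction, $\nabla^D$-harmonic ($\alpha=0$). The contracted Ricci identity for $\nabla^2_{ij}V=0$ then reads $R\indices{^b_{ijk}}\nabla_bV=0$, and since the differentials of the $n+1$ potentials span $T^*M$ this yields flatness of $\nabla^D$, recovering the flatness assertion of Theorem~\ref{thm:second}. On the simply-connected $M$ I would use this flatness to integrate $\rho=-d\lambda$ and then perform the metamorphosis, rescaling $g$ by a suitable potential so that the $C\in\mathfrak C$ become genuine Killing tensors and~\eqref{eq:dW} collapses to~\eqref{eq:dW.proper}; the harmonicity $\alpha=0$ is what guarantees that the resulting proper system is again semi-degenerate rather than of higher degeneracy.

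The step I expect to be the main obstacle is the trace ($\alpha$) part, which has no analogue in the non-degenerate theory. In the forward direction the vanishing of $\tau$ is immediate from $d\rho=0$, but deducing $\alpha=0$ requires tracking how the \emph{full} Killing property of $K$ — not merely its conformal, tracefree shadow $C$ — enters the Laplacian relation~\eqref{eq:semi-degeneracy}. In the converse direction the corresponding difficulty is that a naive trace completion $K=C+\lambda g$ on the fixed metric leaves a residual obstruction proportional to $d\lambda\wedge dV$, so one cannot remain on $g$ and must instead use the genuine conformal rescaling and verify that it closes up consistently over the whole family $\mathfrak C$. Establishing this compatibility, and with it the equivalence of $\alpha=0$ with the existence of a semi-degenerate proper representative, is the crux of the argument.
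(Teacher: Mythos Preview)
Your forward direction runs parallel to the paper's: both pass through closedness of $\rho$ and then invoke the structural equations, with the paper leaning on the cited lemma for the stronger fact that properness actually forces $\rho=0$.

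The substantive divergence, and the real misconception, is in the converse. You assert that a trace completion $K=C+\lambda g$ on the fixed metric ``leaves a residual obstruction proportional to $d\lambda\wedge dV$, so one cannot remain on $g$'' --- but this is exactly the route the paper takes, and it works. Substituting~\eqref{eq:semi-Wilczynski} into the conformal Bertrand--Darboux condition~\eqref{eq:ddW} and separating the coefficients of $\nabla V$ and $V$ (legitimate by semi-degeneracy), the $V$-coefficient yields an identity of the form $d\rho_{ij}\propto C\indices{^a_{[j}}\eta_{i]a}=C\indices{^a_{[j}}\tau_{i]a}$, since the pure-trace piece $\tfrac{\alpha}{n}g$ of $\eta$ is annihilated by the antisymmetrisation. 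Thus the $\alpha$-part you single out as ``the crux'' is in fact invisible at this step. With $\eta=0$ one gets $d\rho=0$, hence $\rho$ is exact on the simply-connected $M$; take $\lambda$ with $\tfrac1n\,d\lambda=\rho$. For \emph{this} $\lambda$ the corrected tensor $K=C+\tfrac1n\lambda\,g$ satisfies the proper Bertrand--Darboux condition~\eqref{eq:ddW.proper} directly from~\eqref{eq:ddW}: the extra term $\tfrac1n\,d\lambda\wedge dV$ coming from the trace is precisely $-dV\wedge\rho$, cancelling the $\rho$-term already present in~\eqref{eq:ddW}. No conformal rescaling of $g$, no flatness of $\nabla^D$, no coupling-constant metamorphosis.

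Your proposed detour through flatness of $\nabla^D$ and a genuine change of metric might ultimately be made to close, but it is much heavier machinery for a statement that yields to a short computation on the fixed metric, and as sketched it carries its own gaps: you do not specify which potential effects the rescaling, why the rescaled system remains irreducible and semi-degenerate, or how the consistency across all of $\mathfrak C$ that you yourself flag is actually verified.
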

\begin{proof}
	The direction ``$\Rightarrow$'' has partially been proven in~\cite[Lemma 3.15]{KSV2024}, which obtains $\tau=0$ under the hypothesis.
	Reviewing this proof, we see that it actually implies $\rho=0$, thus we infer $d\rho=0$, which by comparison with~\eqref{eq:semi-Wilczynski} and~\eqref{eq:ddW} confirms $\eta=0$.
	
	We therefore turn to the direction ``$\Leftarrow$''. We proceed as in~\cite[Cor.~5.3]{KSV2024}, where the corresponding proof was given for the non-degenerate case.
	Let $C_{ij}$ be a trace-free conformal Killing tensor of the conformally superintegrable system. We need to find a function $\lambda$ such that $K_{ij}=C_{ij}+\frac1n\,g_{ij}\lambda$ is a proper Killing tensor, i.e.\ it satisfies the Bertrand-Darboux condition~\eqref{eq:ddW.proper}.
	We proceed in two steps. First we show that $d\omega=0$. Then we prove that this leads to a properly superintegrable system.
	
	For the first step, resubstitute~\eqref{eq:semi-Wilczynski} into~\eqref{eq:ddW}.
	We obtain
	\begin{multline*}
		0 = \nabla_kC\indices{^a_j}\nabla_aV - \nabla_jC\indices{^a_k}\nabla_aV
		\\
		+ C\indices{^a_j}D_{kab}\nabla_bV - C\indices{^a_k}D_{jab}\nabla_bV
		+ C\indices{^a_j}\eta_{ka}V - C\indices{^a_k}\eta_{ja}V
		\\
		+ \nabla_jV\rho_i - \nabla_iV\rho_j
		+ V\nabla_j\rho_i - V\nabla_i\rho_j\,.
	\end{multline*}
	Due to semi-degeneracy, the coefficients of $\nabla V$ and $V$ have to vanish independently.
	Consider first the coefficient of $V$. It yields
	\[
	2\,d\omega_{ij} = \young(i,j)\,\omega_{i,j}
	= \young(i,j)\,C\indices{^a_j}\eta_{ia}
	= \young(i,j)\,C\indices{^a_j}\left(\tau_{ia}+\frac1ng_{ia}\alpha\right)
	= \young(i,j)\,C\indices{^a_j}\tau_{ia}
	= \young(i,j)\,K\indices{^a_j}\tau_{ia}
	\]
	for any trace correction $\lambda$.
	We infer that $\omega$ is exact if $\tau_{ij}=0$, i.e.~$\omega=d\lambda$ for some function $\lambda$.
	Now let $K_{ij}=C_{ij}+\frac1n\,g_{ij}\lambda$ with this specific function~$\lambda$. We conclude, using the trace-freeness of $C_{ij}$,
	\begin{equation}\label{eq:BD.proper}
		\left( d(KdV) \right)_{ij}
		= \frac12\,\young(i,j)\,\left(
		K_{ia,j}V^{,a}
		+ K\indices{_i^a} V_{,aj}
		\right)
		= 0\,,
	\end{equation}
	due to the conformal Bertrand-Darboux condition~\eqref{eq:ddW}.
	So $K_{ij}$ satisfies the proper Bertrand-Darboux equation~\eqref{eq:ddW.proper}.
\end{proof}

We now prove Theorem~\ref{thm:second}, again proceeding on a point by point basis.
We begin with some general considerations analogously to the non-degenerate case.
We denote the curvature tensors again without explicit reference to $\nabla$ as there is no risk of confusion.
Due to the Ricci identity for~\eqref{eq:semi-deg.Wilczynski},
\begin{equation}\label{eq:start.semideg}
	(\Riem_{bijk}-\eta_{ij}g_{kb}+\eta_{ik}g_{jb})\nabla^bV = (\nabla_k\eta_{ij}-\nabla_j\eta_{ik})V\,.
\end{equation}
Due to semi-degeneracy, the coefficients of $\nabla V$ and $V$ have to vanish independently.

\begin{proposition}\label{prop:eta.via.Ric}
	Let $(M,g,D,\eta)$ define a semi-degenerate irreducible system satisfying~\eqref{eq:semi-deg.Wilczynski}.
	Then
	\begin{equation}\label{eq:Eta}
		\eta=-\frac{1}{n-1}\Ric,
	\end{equation}
	where $\Ric$ is the Ricci curvature of $\nabla$.
\end{proposition}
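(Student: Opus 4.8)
The plan is to mimic the non-degenerate derivation but exploit that the semi-degenerate Wilczynski equation \eqref{eq:semi-deg.Wilczynski} is already a closed prolongation with no $\Delta V$ term, which makes things considerably cleaner. Starting from \eqref{eq:start.semideg}, the coefficient of $\nabla^b V$ must vanish identically (by semi-degeneracy we may assume $V$ and the components of $\nabla V$ to be linearly independent), giving
\begin{equation*}
	\Riem_{bijk} = \eta_{ij}g_{kb} - \eta_{ik}g_{jb}\,.
\end{equation*}
This is the key structural identity; everything else follows by tracing it.

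First I would contract this identity in $(b,j)$ to compute the Ricci tensor $\Ric_{ik}=R\indices{^b_{ibk}}=g^{bj}\Riem_{bijk}$. The right-hand side yields $g^{bj}(\eta_{ij}g_{kb}-\eta_{ik}g_{jb}) = \eta_{ik} - n\,\eta_{ik} = -(n-1)\eta_{ik}$, so $\Ric_{ik} = -(n-1)\eta_{ik}$, which is precisely \eqref{eq:Eta}. One subtlety worth checking is that the index placement in $\Ric_{ij}=R\indices{^b_{ibj}}$ matches the contraction $g^{bj}\Riem_{bijk}$ with the convention $\Riem_{aijk}=g_{ab}R\indices{^b_{ijk}}$; a brief remark reconciling these (and noting that since $\nabla$ is torsion-free but not metric, $\Riem$ need not have the usual symmetries, so one should contract the correct pair) will make the argument airtight. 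I would also note in passing that tracing again gives $\Scal^{\nabla,g} = g^{ik}\Ric_{ik} = -(n-1)\,\tr(\eta)$, consistent with $\Delta V = \alpha V$ and $\alpha = \tr(\tau) = \tr(\eta) - \text{(trace corrections)}$, though this is not needed for the statement.

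The only real obstacle is the justification that the coefficient of $\nabla^b V$ in \eqref{eq:start.semideg} vanishes on its own rather than merely the whole combination vanishing. This rests on the linear independence of $V$ and $dV$ over the space of semi-degenerate potentials, which is exactly the content of the discussion following \eqref{eq:semi-degeneracy} in the preliminaries (``we may then assume $V$ and the components of $\nabla V$ to be linearly independent''), together with irreducibility; I would cite \cite{KKM05a,KKM05c,KSV2023,KSV2024} as in the non-degenerate case. Given that, the proof of Proposition~\ref{prop:eta.via.Ric} is a two-line contraction, and the remaining parts of Theorem~\ref{thm:second} — projective flatness and, when $\eta=0$, flatness of $\nabla^D$ with potentials as Laplace eigenfunctions — will follow by substituting $\eta = -\frac{1}{n-1}\Ric$ back into $\Riem_{bijk}=\eta_{ij}g_{kb}-\eta_{ik}g_{jb}$ and recognising the right-hand side as the projective normalisation of the curvature (so the projective Weyl tensor vanishes), exactly paralleling Proposition~\ref{prop:proj.non-deg}.
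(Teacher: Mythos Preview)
Your proposal is correct and follows essentially the same route as the paper: the paper likewise takes the coefficient of $\nabla V$ in~\eqref{eq:start.semideg}, contracts in $(b,j)$, and reads off $\Ric_{ik}+(n-1)\eta_{ik}=0$. Your added remarks on index conventions and on why the coefficients of $V$ and $\nabla V$ vanish separately are helpful elaborations, but the core argument is identical.
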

\begin{proof}
	From the coefficient of $\nabla V$ in~\eqref{eq:start.semideg}, we obtain, contracting in $(b,j)$,
	\[
		\Ric_{ik}-\eta_{ik}+n\eta_{ik} = 0
	\]
	i.e.
	\[
		\Ric_{ij}=-(n-1)\eta_{ij}\,.
	\]
	This proves the claim.
\end{proof}

\begin{proposition}\label{prop:nabla.semi-deg.proj.flat}
	Let $(M,g,D,\eta)$ define a semi-degenerate irreducible system satisfying~\eqref{eq:semi-deg.Wilczynski}.
	Then $\nabla$ is projectively flat.
\end{proposition}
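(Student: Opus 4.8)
The plan is to extract, from the Ricci identity~\eqref{eq:start.semideg}, enough information about the full curvature tensor $\Riem_{bijk}$ of $\nabla$ to conclude that $\nabla$ is projectively flat, i.e.\ that its projective Weyl tensor vanishes. Recall that for a torsion-free connection on an $n$-manifold with $n\geq 3$, projective flatness is equivalent to the decomposition
\[
	R\indices{^b_{ijk}} = \delta^b_k\,\P_{ij} - \delta^b_j\,\P_{ik}\,,
\]
where $\P$ is the projective Schouten tensor (some fixed multiple of the Ricci tensor, after accounting for a possible skew part). So the goal is to show that $R\indices{^b_{ijk}}$ has exactly this shape, with $\P_{ij}$ proportional to $\eta_{ij}$ (hence, by Proposition~\ref{prop:eta.via.Ric}, to $\Ric_{ij}$).

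First I would exploit the non-degeneracy/semi-degeneracy splitting already used repeatedly: since $V$ and the components of $\nabla V$ are linearly independent, the coefficients of $\nabla^b V$ and of $V$ in~\eqref{eq:start.semideg} must vanish separately. The coefficient of $\nabla^b V$ gives precisely
\[
	\Riem_{bijk} = \eta_{ij}g_{kb} - \eta_{ik}g_{jb}\,,
\]
that is, raising the index with $g$,
\[
	R\indices{^b_{ijk}} = \eta_{ij}\delta^b_k - \eta_{ik}\delta^b_j\,.
\]
This is exactly the projectively flat form, with $\P_{ij} = \eta_{ij}$. I would then check consistency with Proposition~\ref{prop:eta.via.Ric}: contracting $b=j$ in the displayed identity yields $\Ric_{ik} = R\indices{^b_{ibk}} = \eta_{ik} - n\,\eta_{ik} = -(n-1)\eta_{ik}$, which reproduces~\eqref{eq:Eta} and in particular confirms that $\Ric$ (hence $\eta$) may be taken to be symmetric here, so no separate skew-part argument is needed. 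Since $R\indices{^b_{ijk}}$ is already in the canonical projectively flat form and $n\geq 3$, the projective Weyl tensor vanishes identically, and $\nabla$ is projectively flat. (One may also remark that the coefficient of $V$ in~\eqref{eq:start.semideg} then forces $\nabla_k\eta_{ij} - \nabla_j\eta_{ik} = 0$, which is the vanishing of the projective Cotton tensor — automatic in dimension $n\geq 3$ once the Weyl part vanishes, but a nice cross-check.)

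The main (and really only) subtlety is the justification that the two coefficients in~\eqref{eq:start.semideg} vanish independently: this rests on the semi-degeneracy hypothesis, namely that for the $(n+1)$-dimensional solution space of~\eqref{eq:semi-deg.Wilczynski} one can choose a point $x_0$ at which $V(x_0)$ and $(\nabla V)_{x_0}$ take arbitrary values, so that a tensor identity of the form $A_{bijk}\nabla^b V + B_{ijk}V = 0$ holding for all solutions forces $A_{bijk}=0$ and $B_{ijk}=0$. This is the same mechanism invoked in the non-degenerate case around~\eqref{eq:nondeg.ricciIdentity}, and it is recorded in~\cite{KSV2023,KSV2024}; I would simply cite it. Everything else is a one-line index manipulation, so the proof is genuinely short.
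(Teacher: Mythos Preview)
Your proposal is correct and follows essentially the same route as the paper: both extract from the $\nabla V$-coefficient of~\eqref{eq:start.semideg} the identity $\Riem_{bijk}=\eta_{ij}g_{kb}-\eta_{ik}g_{jb}$, identify this (after substituting $\eta=-\tfrac{1}{n-1}\Ric$) with the vanishing of the projective Weyl tensor, and conclude projective flatness from $n\geq 3$. Your additional remarks on the symmetry of $\Ric$ and the Cotton-type condition from the $V$-coefficient are valid but not needed for the statement itself.
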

\begin{proof}
	Resubstituting $\eta$ into~\eqref{eq:start.semideg}, using Proposition~\ref{prop:eta.via.Ric}, we infer
	\begin{equation}\label{eq:Riem.semideg}
		\Riem_{ijkl} - \frac1{n-1}(\Ric_{jl}g_{ik}-\Ric_{jk}g_{il})
		= 0\,.
	\end{equation}
	The left hand side in this equation is precisely the so-called \emph{projective Weyl tensor}, thus implying that $\nabla$ is projectively flat since $n\geq3$.
\end{proof}

\begin{proposition}
	Let $(M,g,D,\eta)$ define a semi-degenerate irreducible system satisfying~\eqref{eq:semi-deg.Wilczynski}.
	Then the potential $V$ satisfies
	\[
		\Delta V = -\frac{1}{n-1}\,\Scal\,V\,.
	\]
	with the Laplace operator $\Delta:=\Delta^{\nabla,g}=\tr(g^{-1}\nabla^2)$ and the scalar curvature $\Scal:=\Scal^{\nabla,g}$.
\end{proposition}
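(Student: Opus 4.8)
The plan is to obtain the statement by contracting the closed prolongation~\eqref{eq:semi-deg.Wilczynski} and then invoking Proposition~\ref{prop:eta.via.Ric}. Equation~\eqref{eq:semi-deg.Wilczynski} expresses the full $\nabla$-Hessian of $V$ in terms of $V$ and the secondary structure tensor $\eta$, namely $\nabla^2_{ij}V = \eta_{ij}V$; since for a scalar function $\Delta^{\nabla,g} = \tr(g^{-1}\nabla^2)$ is exactly the $g$-trace of this Hessian, contracting~\eqref{eq:semi-deg.Wilczynski} with $g^{ij}$ yields at once $\Delta^{\nabla,g}V = \tr(\eta)\,V$.

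It then remains to rewrite $\tr(\eta)$ in terms of the scalar curvature of $\nabla$. By Proposition~\ref{prop:eta.via.Ric} one has $\eta = -\frac1{n-1}\Ric$ with $\Ric = \Ric^\nabla$; taking the $g$-trace and using $\Scal^{\nabla,g} = \tr(\Ric)$ (consistent with the conventions $\Ric^\nabla_{ij} = R\indices{^b_{ibj}}$ and $\Scal^{\nabla,g} = (R^\nabla)\indices{^{ab}_{ab}}$ fixed above) gives $\tr(\eta) = -\frac1{n-1}\Scal$. Substituting this into the previous identity produces $\Delta^{\nabla,g}V = -\frac1{n-1}\,\Scal\,V$, which is the assertion.

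I do not expect a genuine obstacle: the argument is immediate once Proposition~\ref{prop:eta.via.Ric} is available, and the only points requiring (minimal) care are bookkeeping — that the operator arising after contracting~\eqref{eq:semi-deg.Wilczynski} is indeed $\Delta^{\nabla,g}$ (which uses only that $\nabla$ differentiates a scalar in the inner slot, so $g^{ij}\nabla_i\nabla_jV = g^{ij}(\nabla^2V)_{ij}$), and that the trace conventions for $\Ric^\nabla$ and $\Scal^{\nabla,g}$ are compatible so that no spurious sign or factor appears. As a consistency check, in the proper case $\eta = 0$ forces $\Ric = 0$ and hence $\Scal = 0$, so the formula collapses to $\Delta^{\nabla,g}V = 0$, in accordance with the flatness of $\nabla^D$ and with the potentials being eigenfunctions of $\Delta^{\nabla,g}$ (with eigenvalue zero) stated in Theorem~\ref{thm:second}(iii).
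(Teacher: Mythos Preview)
Your proposal is correct and follows exactly the same approach as the paper: contract~\eqref{eq:semi-deg.Wilczynski} to obtain $\Delta V=\tr(\eta)V$, and then invoke Proposition~\ref{prop:eta.via.Ric} to rewrite $\tr(\eta)=-\tfrac{1}{n-1}\Scal$. The paper's proof is simply the two-line version of what you wrote.
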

\begin{proof}
	Consider the contraction of~\eqref{eq:semi-deg.Wilczynski}, and substitute $\eta$ using Proposition~\ref{prop:eta.via.Ric}.
	The claim is obtained.
\end{proof}

\section{Proper systems}\label{sec:proper}

We are now going to consider the case of irreducible \emph{properly} superintegrable systems that are either non-degenerate or semi-degenerate.

\begin{theorem}\label{thm:proper.admissible.nondeg}
	For a proper, non-degenerate system the curvature tensor of the induced connection $\nabla=\nabla^T$ satisfies
	\[
				R\indices{^a_{ijk}} =  \frac1{n-1}g_{ij}B\indices{^a_k}-\frac1{n-1}g_{ik}B\indices{^a_j}\,.
	\]
	and is Ricci-symmetric. 
\end{theorem}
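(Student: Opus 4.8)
The plan is to treat the two assertions of Theorem~\ref{thm:proper.admissible.nondeg} separately. The curvature identity is immediate: setting $\tau=0$ in the coefficient of $\nabla V$, equation~\eqref{eq:dv.coeff.nondeg}, makes all $\tau$-terms drop out and leaves exactly $\Riem_{aijk}=\frac{1}{n-1}\bigl(g_{ij}B_{ak}-g_{ik}B_{aj}\bigr)$, which is also recorded in Proposition~\ref{prop:proj.non-deg} via~\eqref{eq:proj.flatness.nondeg}; raising the index $a$ by means of $g$ yields the stated formula for $R\indices{^a_{ijk}}$. The substantial point is therefore the Ricci-symmetry, and the idea is to deduce it from the first Bianchi identity for the torsion-free connection $\nabla^T$ together with the fact that $\nabla^T$ preserves a volume density.

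First I would establish that, in the non-degenerate case, the primary structure tensor $T$ is \emph{totally} trace-free. It is trace-free over its first two (symmetric) slots by definition --- this is precisely what makes $\Delta=\Delta^g$. For the remaining trace $\hat s_k:=T\indices{_{ka}^a}=g^{ab}T_{kab}$ I would use the identity already recorded in Section~\ref{sec:connection&curvature},
\[
\nabla_kg_{ij}-\nabla_jg_{ik}=\frac{1}{n-1}\bigl(\hat s_j\,g_{ik}-\hat s_k\,g_{ij}\bigr),
\]
which expresses that the non-fully-symmetric (``hook'') part of $T$ is pure trace. Contracting it with $g^{ij}$ and using $g^{ij}T_{ijk}=0$ collapses the right-hand side to $-\hat s_k$ while the left-hand side becomes $\hat s_k$, forcing $\hat s=0$.

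With $\hat s=0$, the difference tensor $\nabla^g-\nabla^T=T$ has vanishing trace in the endomorphism sense, so $\nabla^T$ induces on the density line bundle $\det T^*M$ the same (flat) connection as $\nabla^g$; in particular $\nabla^T\mathrm{vol}_g=0$, and hence the trace $R\indices{^a_{aij}}$ of the curvature of $\nabla^T$ over its first pair of indices vanishes. Finally, I would contract the first Bianchi identity $R\indices{^b_{ijk}}+R\indices{^b_{jki}}+R\indices{^b_{kij}}=0$ (valid because $\nabla^T$ is torsion-free) in the upper index with the first lower index; using $R\indices{^a_{aij}}=0$ together with the antisymmetry of $R$ in its last two lower indices, the surviving terms reduce to $\Ric_{jk}-\Ric_{kj}=0$. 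Thus $\nabla^T$ is Ricci-symmetric.

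The only genuine obstacle is the middle step: recognising that Ricci-symmetry is equivalent to $\nabla^T$ being equiaffine (volume-preserving), and extracting the total trace-freeness of $T$ from the structural identity for non-degenerate structure tensors. Everything else is a short formal consequence of Proposition~\ref{prop:proj.non-deg} and of the Bianchi identity.
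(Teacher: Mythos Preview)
Your proof is correct. The curvature identity is handled exactly as in the paper, by specialising Proposition~\ref{prop:proj.non-deg} to $\tau=0$.

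For the Ricci-symmetry, however, you take a genuinely different route. The paper invokes an external result from \cite{KSV2023} stating that the trace $t_i=T\indices{_{ia}^a}$ is \emph{closed}, and then integrates $d(\log f)=-t$ to produce a parallel volume form $f\,\omega$ for $\nabla^T$; Ricci-symmetry then follows from equiaffineness. You instead contract the structural identity
\[
\nabla_kg_{ij}-\nabla_jg_{ik}=\frac{1}{n-1}\bigl(T\indices{_{ja}^a}g_{ik}-T\indices{_{ka}^a}g_{ij}\bigr)
\]
(recorded in Section~\ref{sec:connection&curvature} as the vanishing of the $\Delta V$-coefficient in~\eqref{eq:nondeg.ricciIdentity}, hence a consequence of non-degeneracy) with $g^{ij}$; together with $g^{ij}T_{ijk}=0$ this forces $\hat s_k=-\hat s_k$, so $\hat s=0$ outright. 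Then $\nabla^T$ preserves the Riemannian volume $\omega$ itself, and you finish via the first Bianchi identity. Your argument is more self-contained (no appeal to \cite{KSV2023}) and in fact yields the sharper conclusion $t\equiv0$ rather than merely $dt=0$; the paper's version, on the other hand, is phrased so that the same template immediately applies whenever one only knows closedness of the trace (as in the conformal remark following the theorem).
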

\begin{proof}
	The first claim was already obtained in~\eqref{prop:proj.non-deg}.
	For the second claim, we make use of a result obtained in \cite{KSV2023}, namely that the trace $t_i:=T\indices{_{ia}^a}$ of a non-degenerate structure tensor is closed.
	Now let $\omega$ be the Riemannian volume form on $(M,g)$. We need to show that there is a function $f$ on $M$ such that
	\[
		\nabla_X(f\omega)=0\,.
	\]
	Now we have, for a basis $Y_1,\dots,Y_n$ of $TM$,
	\begin{align*}
		\nabla_X(f\omega)(Y_1,\dots,Y_n) &= \left[ \nabla_X(f)+f\,\tr\,T(X,-) \right]\,\omega(Y_1,\dots,Y_n)
		\\
		&= \left[ \nabla_X(f)+f\,t(X) \right]\,\omega(Y_1,\dots,Y_n)
	\end{align*}
	and conclude that such $f$ exists on a simply connected space and in particular locally.
\end{proof}

\noindent We note that Ricci-symmetry follows analogously in the conformal case using the analogous result of closedness of the trace $T\indices{_{ia}^a}$ from \cite{KSV2024}.
We turn back to the semi-degenerate case.

\begin{theorem}\label{thm:proper.admissible.semideg}
	For a proper, semi-degenerate system the induced connection $\nabla=\nabla^D$ is flat.
\end{theorem}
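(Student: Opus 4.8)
The plan is to combine the two structural facts already established for the semi-degenerate case: that $\nabla = \nabla^D$ is projectively flat (Proposition~\ref{prop:nabla.semi-deg.proj.flat}), and that properness means $\eta = 0$ (the preceding lemma), hence $\tau = 0$ by the definition of the secondary semi-degenerate structure tensor, and therefore $\Ric = -(n-1)\eta = 0$ by Proposition~\ref{prop:eta.via.Ric}. A torsion-free connection that is both projectively flat and Ricci-flat is flat: indeed, from~\eqref{eq:Riem.semideg}, substituting $\Ric = 0$, we get $\Riem_{ijkl} = 0$ directly. So the first and main step is simply to feed $\Ric = 0$ into the already-derived curvature identity~\eqref{eq:Riem.semideg}, which collapses to $R\indices{^a_{ijk}} = 0$.

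There is, however, a subtlety worth addressing explicitly. The identity~\eqref{eq:Riem.semideg} was obtained by lowering an index with $g$ and uses $\Riem_{ijkl} = g_{im}R\indices{^m_{jkl}}$; one must check that the vanishing of this $(0,4)$-object is equivalent to the vanishing of the curvature operator $R\indices{^a_{jkl}}$ of $\nabla$, which is immediate since $g$ is nondegenerate. So I would state this as a one-line remark rather than a separate step. Alternatively, and perhaps more cleanly, I would argue directly from~\eqref{eq:start.semideg}: with $\eta = 0$ it reads $\Riem_{bijk}\,\nabla^b V = 0$, and since in the semi-degenerate case the components of $\nabla V$ are linearly independent (as recorded after~\eqref{eq:semi-degeneracy}), this forces $\Riem_{bijk} = 0$ for all indices, hence $R = 0$. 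This second route avoids invoking Proposition~\ref{prop:eta.via.Ric} and the projective-Weyl identity altogether and is the shortest path.

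I expect no genuine obstacle here; the theorem is essentially a corollary of the machinery already built. The only thing requiring a modicum of care is making sure the "linear independence of $\nabla V$" argument is legitimately available — it is, being part of the standing hypotheses on semi-degenerate potentials — and that one is entitled to conclude pointwise vanishing of a tensor from the vanishing of its contraction against a spanning set of covectors, which is standard linear algebra. I would therefore write the proof in two short paragraphs: first reduce to $\eta = 0 \Rightarrow \tau = 0$ and invoke~\eqref{eq:start.semideg}, then conclude flatness via the nondegeneracy of the space of potentials; optionally I would append a sentence noting that the statement that the potentials are $\nabla$-Laplace eigenfunctions, which appears in Theorem~\ref{thm:second}(iii), then follows trivially since $\Scal = 0$ as well, consistent with the preceding proposition.
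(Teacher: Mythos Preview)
Your proposal is correct and matches the paper's own proof: the paper simply invokes Proposition~\ref{prop:nabla.semi-deg.proj.flat} and resubstitutes $\eta=0$ (hence $\Ric=0$) into~\eqref{eq:Riem.semideg} to obtain $\Riem=0$, exactly your first route. Your alternative shortcut via~\eqref{eq:start.semideg} and the linear independence of the components of $\nabla V$ is also valid and arguably cleaner, but it is not the path the paper takes.
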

\noindent We remark that the Ricci-symmetry of $\nabla$, analogous to the non-degenerate case, follows already from a prior result by Jeremy Nugent \cite{Jeremy} who proves the closedness of $D\indices{_{ka}^a}$.
\begin{proof}
	The flatness immediately follows, analogously to the non-degenerate case, from Proposition~\ref{prop:nabla.semi-deg.proj.flat}, resubstituting $\eta=0$ into~\eqref{eq:Riem.semideg}.
\end{proof}


\noindent To avoid misunderstandings, we note that the flat connection $\nabla$ in the theorem usually does not define a Hessian structure. The theorem states that a properly semi-degenerate system induces a flat structure $\nabla$ on the Riemannian manifold $(M,g)$. A Hessian structure, however, would require the metric to satisfy $g=\nabla df$ for some function $f$ on $M$, which is equivalent to requiring that $g_{ia}D\indices{_{jk}^a}$ is symmetric \cite{Shima}. Note that $D\indices{_{jk}^a}$ is only required to be symmetric in~$(j,k)$.

\begin{proposition}
	Let $(M,g,D,\eta)$ define a semi-degenerate irreducible system satisfying~\eqref{eq:semi-deg.Wilczynski}.
	Assume $\eta=0$.
	Then 
	the potentials are eigenfunctions for the eigenvalue $0$.
\end{proposition}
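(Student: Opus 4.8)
The plan is to specialise the general semi-degenerate machinery to the proper case $\eta=0$ and read off the eigenvalue. First I would recall that for a semi-degenerate potential we have the closed-prolongation Wilczynski equation in the form~\eqref{eq:semi-deg.Wilczynski}, namely $\nabla^2_{ij}V=\tau_{ij}V$, where $\nabla=\nabla^D$ and $\tau$ coincides with $\eta$ up to a trace term, and that contraction gives $\Delta V=\alpha V$ with $\alpha=\tr(\tau)$ (or equivalently, by the proposition preceding this one, $\Delta V=-\tfrac1{n-1}\Scal\,V$).

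Next I would invoke Proposition~\ref{prop:eta.via.Ric}, which identifies $\eta=-\tfrac1{n-1}\Ric^\nabla$. Under the hypothesis $\eta=0$ this forces $\Ric^\nabla=0$; combined with Theorem~\ref{thm:proper.admissible.semideg} (or directly with Proposition~\ref{prop:nabla.semi-deg.proj.flat} and the resubstitution $\eta=0$ into~\eqref{eq:Riem.semideg}) the connection $\nabla=\nabla^D$ is flat, so in particular $\Scal^{\nabla,g}=0$. Feeding this back into the contracted Wilczynski equation $\Delta V=-\tfrac1{n-1}\Scal\,V$ yields $\Delta V=0$, i.e.\ every potential $V$ in the space $\mathcal V$ is an eigenfunction of the Laplace operator $\Delta=\Delta^{\nabla,g}$ for the eigenvalue $0$. (Equivalently, $\Delta V=\alpha V$ together with $\alpha=\tr(\tau)=\tr(\eta)=0$ gives the same conclusion directly.)

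There is essentially no obstacle here: the statement is a one-line corollary of results already established in the section, and the only thing to be careful about is which Laplacian is meant — it is the Laplacian $\Delta^{\nabla,g}=\tr(g^{-1}\nabla^2)$ of the induced connection $\nabla^D$, not $\Delta^g$, since the two differ by the first-order term $dV(s)$ as noted after Theorem~\ref{thm:second}. I would therefore phrase the proof as: "By Proposition~\ref{prop:eta.via.Ric}, $\eta=0$ implies $\Ric^\nabla=0$, hence $\Scal^{\nabla,g}=0$; the preceding proposition then gives $\Delta V=-\tfrac1{n-1}\Scal\,V=0$," and close with $\blacksquare$ (or, in this paper's style, simply end the proof environment).

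\begin{proof}
	By Proposition~\ref{prop:eta.via.Ric} the hypothesis $\eta=0$ forces $\Ric^\nabla=0$, and hence $\Scal^{\nabla,g}=g^{ij}\Ric_{ij}=0$. The preceding proposition then gives
	\[
		\Delta V = -\frac1{n-1}\,\Scal\,V = 0\,,
	\]
	so every potential $V\in\mathcal V$ is an eigenfunction of $\Delta=\Delta^{\nabla,g}$ for the eigenvalue $0$. Equivalently, contracting~\eqref{eq:semi-deg.Wilczynski} yields $\Delta V=\alpha V$ with $\alpha=\tr(\tau)=\tr(\eta)=0$.
\end{proof}
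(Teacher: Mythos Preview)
Your proof is correct and follows essentially the same approach as the paper: the paper's proof is a single line (``Resubstitute $\eta=0$ into~\eqref{eq:Riem.semideg}''), which amounts to exactly the chain you spell out, namely $\eta=0\Rightarrow\Ric^\nabla=0\Rightarrow\Scal^{\nabla,g}=0\Rightarrow\Delta V=0$. Your alternative route via $\alpha=\tr(\eta)=0$ in the contracted Wilczynski equation is an equally valid shortcut to the same conclusion.
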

\begin{proof}
	Resubstitute $\eta=0$ into~\eqref{eq:Riem.semideg}.
\end{proof}

%

\section{Examples}\label{sec:ex}

We illustrate the statements of the previous sections with a few examples from the literature.

\subsection{Non-degenerate potentials}

We reconsider some well-studied examples on constant curvature spaces, which are investigated in~\cite{KKM2006,KKM06a,Capel_phdthesis}.


\begin{example}
	We compute $B_{ij}$ for the Smorodinski-Winternitz system in dimension~$n$, $n\geq3$, which has the potential
	\[
		V(x^1,\dots,x^n)=a_0\,\sum_{k=1}^n(x^k)^2 + \sum_{k=1}^n\frac{a_k}{(x^k)^2}\,.
	\]
	We find $\tau_{ij}=0$, and
	\[
		B=\frac{2e_{ij}}{x^ix^j}\,dx^i\,dx^j
	\]
	where $e_{ii}=+1$ and $e_{ij}=-1$ if $i\ne j$.
	We then straightforwardly verify the formulas
	\[
		\Ric_{ij}=-\frac1{n-1}(B_{ij}-g_{ij}\tr(B))\,.
	\]
	and
	\[
		\Riem_{aijk} =  \frac1{n-1}g_{ij}B_{ak}-\frac1{n-1}g_{ik}B_{aj}\,.
	\]
	for the Ricci and Riemann tensors of $\nabla$.
\end{example}

\subsection{Semi-degenerate potentials}

We investigate semi-degenerate potentials found in~\cite{Escobar-Ruiz&Miller}. These are of dimension~3, but some of them can be extended to arbitrary dimension.

%

\begin{example}
	We consider a conformal example from~\cite{Escobar-Ruiz&Miller}, namely the system labeled `I'.
	The potential for the system `I' is
	\[
	V= a_0\,\frac{x^2 + y^2 + z^2 - 1}{(x^2 + y^2 + z^2 + 1)^2\sqrt{x^2 + y^2 + z^2}}
	+ \frac{a_1}{x^2} + \frac{a_2}{y^2} + a_3\,\frac{4}{(x^2 + y^2 + z^2 + 1)^2}.
	\]
	One easily computes $D$, $\eta$ and $\nabla=\nabla^D$ and verifies that $\nabla=\nabla^D$ has vanishing torsion. Moreover, the Ricci tensor of $\nabla$ is obtained as
	\[
	\Ric_{ij} = \frac{24}{(x^2+y^2+z^2+1)(x^2+y^2+z^2)}\,\sum_{1\leq i,j\leq 3}\,x^ix^j\,dx^i\,dx^j
	\]
	where we write $(x^1,x^2,x^3)=(x,y,z)$.
	We also verify $\Ric=-2\eta$. The projective curvature tensor is confirmed to vanish, $\Proj^\nabla_{ij} = 0$,
	as expected. We also verify
	\[
	\Delta V=-\frac{\Scal}{2}\,V
	\]
	where $\Delta$ is the Laplace operator and $\Scal$ the scalar curvature for $\nabla$ (using $g$ to raise indices).
\end{example}

\begin{example}
	We consider two proper examples from~\cite{Escobar-Ruiz&Miller}, namely the system labeled `II' and `III'.
	We quickly compute $D$ and confirm
	\[ \eta=0\,,\quad R=0\,,\quad \Delta V=0\,. \]
	where $R=R^\nabla$ and where $\Delta=\Delta^{\nabla,g}$.
\end{example}

\section{Discussion and conclusion}\label{sec:discussion}

This paper has argued that the Ricci-symmetric, torsion-free connection naturally defined by the primary structure tensor of a (proper) non- or semi-degenerate potential gives rise to a flat projective structure.
This suggests that, in addition to conformal geometry \cite{KSV2024}, projective differential geometry is another important geometric structure underpinning superintegrable systems. Moreover, this corroborates that the integrability conditions for the ``Wilczynski equations'' \eqref{eq:Wilczynski} and~\eqref{eq:semi-Wilczynski} are inherently \emph{geometric} conditions.

The only remaining obstacle for (maximal) superintegrability then is the existence of (sufficiently many suitable) Killing tensor fields $K_{ij}$ satisfying~\eqref{eq:ddW}, where ``suitable'' alludes to the fact that $2n-1$ \emph{functionally} independent integrals of motion have to exist for (maximal) superintegrability.
Resubstituting the Wilczynski equation, i.e.~\eqref{eq:Wilczynski} respectively~\eqref{eq:semi-Wilczynski}, into the Bertrand-Darboux condition~\eqref{eq:ddW}, we obtain, cf.~also \cite{KSV2023,Jeremy},\medskip

\begin{minipage}{0.45\textwidth}\centering
	\textbf{In the non-degenerate case:}
	\[
		\young(j,k)\left[ \nabla_kK\indices{^a_j}V_a + K\indices{^b_j}T\indices{_{kb}^a}V_a \right] = 0
	\]
\end{minipage}
\begin{minipage}{0.45\textwidth}\centering
	\textbf{In the semi-degenerate case:}
	\[
		\young(j,k)\left[ \nabla_kK\indices{^a_j}V_a + K\indices{^b_j}D\indices{_{kb}^a}V_a \right] = 0
	\]
\end{minipage}
\medskip

\noindent Hence, in either case, we have an equation of the form
\[
	\nabla_kK_{ij} = P\indices{_{ijk}^{ab}}K_{ab}
\]
where $P\indices{_{ijk}^{ab}}$ is symmetric in $(i,j,k)$ and $(a,b)$.
Note that $P$ depends on $T$ in the non-degenerate case, and on $D$ in the semi-degenerate one, but we shall use the same symbol here for conciseness.
The Ricci identity, in both cases, thus has the form, where $R$ is the curvature tensor of $\nabla$,
\begin{equation}\label{eq:P.integrability}
	\underbrace{ \young(k,l)\left[
		\nabla_lP\indices{_{ijk}^{ab}}+P\indices{_{ijk}^{pq}}P\indices{_{pql}^{ab}}
		- \frac14\young(ij) (R\indices{^a_{ikl}}g\indices{^b_j}-R\indices{^b_{ikl}}g\indices{^a_j})
		\right] }_{\text{depends on $\nabla$ and $g$ only}} K_{ab} = 0\,.
\end{equation}
For a given Riemannian metric $(M,g)$ a torsion-free, Ricci-symmetric connection $\nabla$ will be called \emph{admissible} (for non-degeneracy) if it is as in (i) of Theorem~\ref{thm:proper.admissible.nondeg} resp.\ Theorem~\ref{thm:proper.admissible.semideg}. It is called \emph{admissible} (for semi-degeneracy) if it is as in (ii) of the theorems.

\subsection*{Acknowledgements}
The author would like to thank Jonathan Kress, Vladimir Matveev, Vicente Cortes, Konrad Schöbel and Jeremy Nugent for discussions.
This research has been funded by the German Research Foundation (Deutsche Forschungsgemeinschaft), project number 540196982.

\bibliographystyle{amsalpha}
\bibliography{refs}
\medskip

\end{document}